\documentclass[11pt,twoside,reqno,centertags,draft]{amsart}
\usepackage{amsfonts}
\usepackage{color,enumitem,graphicx}
\usepackage[colorlinks=true,urlcolor=blue,
citecolor=red,linkcolor=blue,linktocpage,pdfpagelabels,
bookmarksnumbered,bookmarksopen]{hyperref}

\setcounter{page}{1}
  \usepackage{amsmath,amsthm,amsfonts,amssymb}

  \pagestyle{myheadings}
\thispagestyle{empty}
    \textwidth = 6 true in
    \textheight = 9.    true in

  \voffset= -20 true pt
  \oddsidemargin=0true in
  \evensidemargin=0true in

\begin{document}
\title{ Weak solutions of semilinear elliptic equation involving Dirac mass}
\date{}
\maketitle

\vspace{ -1\baselineskip}

{\small
\begin{center}

\medskip

  {\sc  Huyuan Chen\quad Patricio Felmer\quad Jianfu Yang}
  \medskip

\end{center}
}

\renewcommand{\thefootnote}{}
\footnote{AMS Subject Classifications: 35J60, 35J20.}
\footnote{Key words: Weak solution, Mountain Pass theorem, Dirac mass.}

\begin{quote}
{\bf Abstract.} In this paper, we study the following elliptic problem with Dirac mass
\begin{equation}\label{eq 0.1}
\left\{  \arraycolsep=1pt
\begin{array}{lll}
 \displaystyle   -\Delta   u=Vu^p+k \delta_0\quad
 &{\rm in}\quad  \mathbb{R}^N,\\[2mm]
 \phantom{ }
 \displaystyle  \lim_{|x|\to+\infty}u(x)=0,
\end{array}
\right.
\end{equation}
where $N>2$, $p>0$, $k>0$, $\delta_0$ is Dirac mass at the origin, the function $V$ is a locally Lipchitz continuous in $\mathbb{R}^N\setminus\{0\}$
satisfying
$$
V(x)\le   \frac{c_1}{|x|^{a_0}(1+|x|^{a_\infty-a_0})}
$$
with  $a_0<N,\ a_\infty>a_0 $ and $c_1>0$.
We obtain two positive solutions of (\ref{eq 0.1}) with additional conditions for parameters on $a_\infty, a_0$, $p$ and $k$. The first solution is
a minimal positive solution and the second solution is constructed by Mountain Pass theorem.
\end{quote}

\newcommand{\N}{\mathbb{N}}
\newcommand{\R}{\mathbb{R}}
\newcommand{\Z}{\mathbb{Z}}

\newcommand{\cA}{{\mathcal A}}
\newcommand{\cB}{{\mathcal B}}
\newcommand{\cC}{{\mathcal C}}
\newcommand{\cD}{{\mathcal D}}
\newcommand{\cE}{{\mathcal E}}
\newcommand{\cF}{{\mathcal F}}
\newcommand{\cG}{{\mathcal G}}
\newcommand{\cH}{{\mathcal H}}
\newcommand{\cI}{{\mathcal I}}
\newcommand{\cJ}{{\mathcal J}}
\newcommand{\cK}{{\mathcal K}}
\newcommand{\cL}{{\mathcal L}}
\newcommand{\cM}{{\mathcal M}}
\newcommand{\cN}{{\mathcal N}}
\newcommand{\cO}{{\mathcal O}}
\newcommand{\cP}{{\mathcal P}}
\newcommand{\cQ}{{\mathcal Q}}
\newcommand{\cR}{{\mathcal R}}
\newcommand{\cS}{{\mathcal S}}
\newcommand{\cT}{{\mathcal T}}
\newcommand{\cU}{{\mathcal U}}
\newcommand{\cV}{{\mathcal V}}
\newcommand{\cW}{{\mathcal W}}
\newcommand{\cX}{{\mathcal X}}
\newcommand{\cY}{{\mathcal Y}}
\newcommand{\cZ}{{\mathcal Z}}

\newcommand{\abs}[1]{\lvert#1\rvert}
\newcommand{\xabs}[1]{\left\lvert#1\right\rvert}
\newcommand{\norm}[1]{\lVert#1\rVert}

\newcommand{\loc}{\mathrm{loc}}
\newcommand{\p}{\partial}
\newcommand{\h}{\hskip 5mm}
\newcommand{\ti}{\widetilde}
\newcommand{\D}{\Delta}
\newcommand{\e}{\epsilon}
\newcommand{\bs}{\backslash}
\newcommand{\ep}{\emptyset}
\newcommand{\su}{\subset}
\newcommand{\ds}{\displaystyle}
\newcommand{\ld}{\lambda}
\newcommand{\vp}{\varphi}
\newcommand{\wpp}{W_0^{1,\ p}(\Omega)}
\newcommand{\ino}{\int_\Omega}
\newcommand{\bo}{\overline{\Omega}}
\newcommand{\ccc}{\cC_0^1(\bo)}
\newcommand{\iii}{\opint_{D_1}D_i}

\numberwithin{equation}{section}

\vskip 0.2cm \arraycolsep1.5pt
\newtheorem{lemma}{Lemma}[section]
\newtheorem{theorem}{Theorem}[section]
\newtheorem{definition}{Definition}[section]
\newtheorem{proposition}{Proposition}[section]
\newtheorem{remark}{Remark}[section]
\newtheorem{corollary}{Corollary}[section]

\setcounter{equation}{0}
\section{Introduction}

The main objective of this paper is to study the existence of multiple weak solutions to the following nonlinear elliptic problem with Dirac mass
\begin{equation*}
\left\{
\begin{aligned}
-&\Delta   u= Vu^p+k \delta_0 && \text{in $\mathbb{R}^N$,}\\
\,\,\, &\lim_{|x|\to+\infty}u(x)=0
\end{aligned}
\right.\leqno{(P_k)}
\end{equation*}
where $N>2$, $p>0$, $k>0$, $\delta_0$ is Dirac mass at the origin, the weight function $V$ is locally Lipchitz continuous in $\mathbb{R}^N\setminus\{0\}$. Problem ($P_k$) concerns with source term in contrast with
problems with absorption terms. The semi-linear elliptic
equations with absorption terms
\begin{equation}\label{eq003}
 \left\{\arraycolsep=1pt
\begin{array}{lll}
 -\Delta  u+ g(u)=\nu \quad & \rm{in}\quad\Omega,\\[2mm]
 \phantom{ -\Delta  +g(u)}
u=0  \quad & \rm{on}\quad \partial\Omega,
\end{array}
\right.
\end{equation}
where $\nu$ is a bounded Radon measure,  $\Omega$ is a bounded $C^2$ domain in $\R^N$ and  $g:\R\to\R$ is   nondecreasing and $g(0)\ge0$, has been extensively studied for the last
several decades. A fundamental contribution to the problem is due to Brezis
\cite{B12}, Benilan and Brezis \cite{BB11},  where they showed the existence and uniqueness of weak solution for problem (\ref{eq003}) if the function $g:\R\to\R$  satisfies the subcritical assumption:
$$\int_1^{+\infty}(g(s)-g(-s))s^{-1-\frac{N}{N-2}}ds<+\infty.$$
The method is to approximate the measure $\nu$ by a sequence of regular functions, and find classical solutions which
converges to a weak solution of (\ref{eq003}). Meanwhile, it is necessary to establish uniform bounds for the sequence of classical solutions. The uniqueness is then derived by
Kato's inequality. Such a method has been extended to solve the equations with
boundary measure data  in   \cite{GV,MV1,MV2,MV3,MV4} and other subjects  in   \cite{BP2,BP,Hung, BV}.

In the source term case, one adapts different approaches since it is hard to find uniform bound if one uses the approaching process in \cite{BB11,B12}. Moreover, it seems that the uniqueness is no longer valid in general. Actually, for the problem
\begin{equation}\label{eq 1.3}
\left\{  \arraycolsep=1pt
\begin{array}{lll}
 \displaystyle   -\Delta   u= u^q+  \lambda\delta_0\quad
 &{\rm in}\quad \Omega,\\[2mm]
 \phantom{ -\Delta}
 \displaystyle   u=0\quad
 &{\rm on}\quad \partial\Omega,
\end{array}
\right.
\end{equation}
where $q\in(1,\frac N{N-2})$, $\lambda>0$ and $\Omega$ is a bounded domain containing the origin, it was shown in \cite{L} that there exists  $\lambda^*>0$ such that
(\ref{eq 1.3}) has two solutions for $\lambda\in(0,\lambda^*)$. For general Radon measures $\mu$, one weak solution was found in \cite{BP} for (\ref{eq 1.3}) with $\delta_0$ replaced by $\mu$. If $1 < q < \frac{N}{N-2}$, the solutions of (\ref{eq 1.3}) are isolated singular solutions of
\begin{equation}\label{eq 1.4}
 -\Delta u=u^q\quad{\rm in}\quad \Omega\setminus\{0\},
\end{equation}
such solutions asymptotically behave at the origin like $|x|^{2-N}$.
The  nonnegative solutions  to (\ref{eq 1.4}) with isolated singularities have been classified  in \cite{A} for $q = \frac{N}{N-2}$, in \cite{GS}
for $\frac{N}{N-2} < q < \frac{N+2}{N-2}$ and in \cite{CGS} for
$q=\frac{N+2}{N-2}$. Using this classification of singular solutions, one may construct solutions of the equation    like   \eqref{eq 1.4} with many singular points, see for instance \cite{MP,P1}.

In the whole space, it was proved in \cite{NS} that the problem
 \begin{equation}\label{eq 1.5}
\left\{  \arraycolsep=1pt
\begin{array}{lll}
 \displaystyle  \ \   -\Delta   u+u= u^q+  \kappa\sum_{i=1}^m\delta_{x_i}\quad
 &{\rm in}\quad \mathcal{D}'(\R^N),\\[2mm]
 \phantom{}
 \displaystyle  \lim_{|x|\to+\infty}u(x)=0
\end{array}
\right.
 \end{equation}
possesses at least two weak solutions for $\kappa>0$ small and $q\in(1,\frac{N}{N-2})$. A feature of the operator
$-\Delta  +id$ is that its fundamental solution decays exponentially at infinity, thus the fundamental solution belongs to $L^1(\R^N)$. This fact plays an essential role in finding solutions of \eqref{eq 1.5}. While in our problem ($P_k$), the fundamental solution of $-\Delta$ does not belong to $L^1(\R^N)$. It brings difficulties in the process of finding solutions of ($P_k$).

In this paper, we will find two weak solutions for problem ($P_k$). By a {\it weak solution} of ($P_k$) we mean a nonnegative function $u\in L^1_{loc}(\R^N)$ such that $V u^p \in L^1(\R^N)$, $$\lim_{r\to+\infty}\sup_{x\in\R^N\setminus B_r(0)}u(x)=0$$
and $u$ satisfies
$$
\int_{\R^N} u(-\Delta)  \xi dx =\int_{\R^N}   Vu^p\xi dx +k\xi(0), \quad \forall \xi\in C^{1,1}_c(\R^N).
$$

\bigskip

We suppose throughout this paper that there exist  $a_0<N,\ a_\infty>a_0$, $c_1>0$ such that the function $V(x)$ satisfies
\begin{equation}\label{a}
V(x)\le  V_0(x):= \frac{c_1}{|x|^{a_0}(1+|x|^{a_\infty-a_0})}.
\end{equation}
Condition \eqref{a} implies that the limiting behavior of $V$ at the origin is controlled by $|x|^{-a_0}$ and that of $V(x)$ at infinity controlled by $|x|^{-a_\infty}$ respectively.

The first    result is on the minimal solution of ($P_k$).

\begin{theorem}\label{teo 0}
Suppose condition \eqref{a} holds and $p>0$ satisfies
\begin{equation}\label{p}
p\in(\frac{N-a_\infty}{N-2},\frac{N-a_0}{N-2}).
\end{equation}
Then,

$(i)$ there exists $ k^*=k^*(p,V)\in(0,+\infty]$ such that for $ k\in(0, k^*)$, there exists a minimal positive solution $u_{k,V}$ of ($P_k$)
and for $p>1$ and $ k> k^*$, there is no solution for ($P_k$). Moreover, we have
$k^*<\infty$ if $p>1$;  $k^*=+\infty$ if $0<p<1$ or $p=1$ and $c_1>0$ small.

$(ii)$ for $p$ fixed, the mapping  $V\mapsto k^*$  is decreasing and  the mapping $V\mapsto u_{k,V}$ is increasing.

$(iii)$ if $V$ is  radially symmetric, the minimal solution $u_{k,V}$ is also radially symmetric.
\end{theorem}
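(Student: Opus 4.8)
The plan is to construct the minimal solution by a monotone iteration scheme built on the fundamental solution of $-\Delta$. First I would fix the "canonical" singular piece $\Phi_k := k\, c_N |x|^{2-N}$, which solves $-\Delta \Phi_k = k\delta_0$ in $\mathcal D'(\R^N)$ and decays to $0$ at infinity; then any solution $u$ should be written as $u = \Phi_k + w$ where $w \ge 0$ is the "regular" part. The key analytic input is a pointwise estimate for the Newtonian potential of $V u^p$: using condition \eqref{a} together with the ansatz $u \asymp |x|^{2-N}$ near $0$ and $u \to 0$ at $\infty$, the integrability of $V u^p$ and the required decay force exactly the two-sided restriction \eqref{p} on $p$. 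Concretely, I would set up the operator $T$ defined by
\begin{equation*}
T(u)(x) = \Phi_k(x) + \int_{\R^N} c_N |x-y|^{2-N} V(y) u(y)^p \, dy,
\end{equation*}
and look for a fixed point by iterating $u_0 = \Phi_k$, $u_{n+1} = T(u_n)$.

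The main steps are: (1) show $T$ maps the cone $\{ \Phi_k \le u \le C\Phi_k \}$ (for suitable $C = C(p,V)$, when $k$ is small) into itself — this is where I would use \eqref{a} and \eqref{p} to bound $\int c_N|x-y|^{2-N} V_0(y) |y|^{(2-N)p}\,dy \le C_* \, |x|^{2-N}$ near the origin and $\le C_* |x|^{-(a_\infty - p(N-2))}$-type decay at infinity, the exponents being positive precisely by \eqref{p}; (2) observe $T$ is monotone increasing in $u$, so the iterates $u_n$ are monotone (increasing from $u_0=\Phi_k$), hence converge pointwise to some $u_{k,V}$; (3) pass to the limit by monotone/dominated convergence to get that $u_{k,V}$ is a weak solution in the sense defined in the excerpt, and verify the decay condition $\sup_{|x|\ge r} u \to 0$; (4) minimality: if $v$ is any weak solution, then $v \ge \Phi_k$ (by the maximum principle / Kato's inequality applied to $v - \Phi_k$, using $Vv^p \ge 0$), and then induction gives $v \ge u_n$ for all $n$, hence $v \ge u_{k,V}$. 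For part $(i)$'s threshold: define $k^* = \sup\{ k>0 : (P_k) \text{ has a solution}\}$; solvability for small $k$ comes from step (1)–(3). For $p>1$ one gets $k^*<\infty$ by testing the equation against a suitable cutoff of the fundamental solution and using a convexity/Jensen argument to derive $k \le C$ — the standard obstruction for superlinear source problems; for $0<p\le 1$ (with $c_1$ small when $p=1$) the map $u \mapsto Vu^p$ is sublinear/linear, so the cone-invariance in step (1) holds for all $k$, giving $k^*=\infty$. Non-existence for $k>k^*$ when $p>1$ follows because the solution set of admissible $k$ is an interval: if $(P_k)$ is solvable then so is $(P_{k'})$ for $0<k'<k$ (a solution for $k$ is a supersolution for $k'$, and $\Phi_{k'}$ is a subsolution, so sub/supersolution iteration applies).

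For $(ii)$: if $V_1 \le V_2$ pointwise, then every supersolution for $V_2$ is a supersolution for $V_1$, so the minimal-solution iteration for $V_1$ is dominated by that for $V_2$, giving $u_{k,V_1} \le u_{k,V_2}$; and the set of solvable $k$ for $V_1$ contains that for $V_2$, so $k^*(p,V_1) \ge k^*(p,V_2)$, i.e. $V \mapsto k^*$ is decreasing. For $(iii)$: when $V$ is radial, the iteration starts at the radial function $\Phi_k$ and $T$ preserves radial symmetry (the Newtonian potential of a radial function is radial), so each $u_n$ is radial and hence so is the limit $u_{k,V}$; by minimality this must be \emph{the} minimal solution.

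The hard part will be step (1) — making the cone-invariance estimate precise, i.e. proving a clean two-sided bound $c\,|x|^{2-N}(1+|x|)^{-\gamma} \le u_{k,V}(x)$ and $u_{k,V}(x) \le C\,|x|^{2-N}(1+|x|)^{-\gamma}$ with the right $\gamma>0$, uniformly controlling both the near-origin singularity (where $V$ may blow up like $|x|^{-a_0}$) and the decay at infinity (where one must beat the non-integrable tail of the fundamental solution of $-\Delta$, the difficulty flagged in the introduction). This forces a careful splitting of the convolution integral into regions $|y|\le |x|/2$, $|x|/2 \le |y| \le 2|x|$, $|y|\ge 2|x|$ and optimizing over the exponents, where the two endpoint constraints in \eqref{p} appear as the conditions that the relevant integrals converge at $0$ and at $\infty$ respectively. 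Everything else (monotonicity, passing to the limit, minimality, the comparison statements in $(ii)$–$(iii)$) is then routine once this estimate and the sub/supersolution machinery are in hand.
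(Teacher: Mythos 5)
Your plan follows essentially the same route as the paper: the monotone iteration $v_0=k\mathbb{G}[\delta_0]$, $v_{n}=\mathbb{G}[Vv_{n-1}^p]+k\mathbb{G}[\delta_0]$, bounded above via the key convolution estimate $\mathbb{G}[V\mathbb{G}^p[\delta_0]]\le c_2\mathbb{G}[\delta_0]$ (your ``cone invariance'' step (1) is exactly the paper's Lemma 2.1 plus the super-solution $w_{t}$), with minimality by induction, the down-closedness of the solvable set of $k$, the sublinear case giving $k^*=+\infty$, and the superlinear obstruction giving $k^*<\infty$. The only place you are looser than the paper is the finiteness of $k^*$ for $p>1$, where the paper tests against the regular function $\mathbb{G}[\chi_{B_r(x_0)}]$ (cut off at scale $R$) rather than a cutoff of the fundamental solution (which would not be an admissible $C^{1,1}_c$ test function) and uses the lower bound $u\ge k\mathbb{G}[\delta_0]$ in place of a Jensen argument, since there is no first eigenfunction on all of $\R^N$; this is a presentational rather than a substantive difference.
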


\bigskip

 In the sequel, we denote $u_{k,V}$ the minimal solution obtained  in Theorem \ref{teo 0} corresponding to $k$ and $V$.

We remark that the minimal solution of ($P_k$) is derived by iterating an increasing sequence  $\{v_n\}_n$ defined by
$$v_0= k \mathbb{G}[\delta_0], \qquad v_n  = \mathbb{G}[Vv_{n-1}^p]+ k \mathbb{G}[\delta_0],$$
where  $\mathbb{G}[\cdot]$ is  the
Green operator defined as
$$\mathbb{G}[f](x)=\int_{\R^N} G(x,y)f(y)dy $$
and   $G$ is  the Green kernel of $ -\Delta $ in $\R^N\times\R^N$. It is known that $\mathbb{G}[\delta_0]$ is the fundamental solution of $-\Delta$.
To insure the convergence of the sequence  $\{v_n\}_n$, we need to construct a suitable barrier function by using the estimate
\begin{equation}\label{1.2}
 \mathbb{G}[V\mathbb{G}^p[\delta_0]]\le c_2\mathbb{G}[\delta_0]\quad {\rm in}\quad \R^N\setminus\{0\},
\end{equation}
where $c_2>0$. From the estimate (\ref{1.2}), the optimal range of $k$ for constructing super solution of ($P_k$) is $(0,k_p]$, where
 \begin{equation}\label{k}
k_p=(c_2p)^{-\frac1{p-1}}\frac{p-1}{p}.
 \end{equation}
Here we observe that $k^*\ge k_p$.

Once the minimal solution is found, we explore further properties of the solution. Precisely, we show such a solution
is regular except for the origin, and decays at infinity. These properties allow us to establish the stability of
the minimal solution, whereas this stability plays the role in finding the second solution.

Denote by $\mathcal{D}^{1,2}(\R^N)$ the Sobolev space which is the closure of $C^\infty_c(\R^N)$ under the norm
$$\norm{v}_{\mathcal{D}^{1,2}(\R^N)}=\left(\int_{\R^N}|\nabla v|^2 dx \right)^{\frac12}.$$
We say a solution $u$ of ($P_k$) is {\it stable (resp. semi-stable)} if
$$
\int_{\R^N}  |\nabla \xi|^2 dx> p\int_{\R^N} V u^{p-1}\xi^2 dx,\quad ({\rm resp.}\ \ge)\quad \forall \xi\in \mathcal{D}^{1,2}(\R^N)\setminus\{0\}.
$$

Let $k_p$ be given in (\ref{k}). The properties of the minimal   solution    are collected as follows.

\begin{theorem}\label{teo 0a}
Suppose that the  function $V$ satisfies (\ref{a}) with $a_\infty>a_0$ and $a_0\in\R$, and $p$ satisfies (\ref{p}).

$(i)$ If  $a_0<2$, $p>1$ and $k\in (0,k_p)$,  then  $u_{k,V}$  is a classical solution of the equation
\begin{equation}\label{eq 1.2}
   -\Delta   u=Vu^p\quad
 {\rm in}\quad \R^N\setminus\{0\},\quad \lim_{|x|\to+\infty}u(x)=0
\end{equation}
and satisfies
\begin{equation}\label{1.4}
 \sup_{x\in \R^N\setminus\{0\}} u(x)|x|^{N-2}< +\infty.
 \end{equation}

 Moreover, $u_{k,V}$ is stable and there   exists $c_3>0$ independent of $k$ such that
\begin{equation}\label{1.5}
\int_{\R^N}  |\nabla \xi|^2 dx- p\int_{\R^N} V u_{k,V}^{p-1}\xi^2 dx\ge c_3[(k^*)^{\frac{p-1}p}-k^{\frac{p-1}p}]\int_{\R^N}  |\nabla \xi|^2 dx,\quad  \forall \xi\in \mathcal{D}^{1,2}(\R^N)\setminus\{0\}.
\end{equation}

 $(ii)$ If
\begin{equation}\label{1.3}
 p\in \left(0,\frac{N}{N-2}\right)
\end{equation}
and $k\in (0,k^*)$, the minimal solution $u_{k,V}$ is stable and satisfies (\ref{1.5}). Moreover, any positive weak solution $u$ of ($P_k$)  is a classical solution of problem \eqref{eq 1.2} satisfying \eqref{1.4}.
\end{theorem}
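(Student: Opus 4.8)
\medskip\noindent\emph{Outline of the proof.} The plan is to pass to the integral equation, then to establish the pointwise bound \eqref{1.4} together with interior regularity, and finally to extract stability and the quantitative gap \eqref{1.5} from a comparison between minimal solutions at nearby parameters. If $u$ is any positive weak solution of $(P_k)$, then $Vu^p\in L^1(\R^N)$ and $-\Delta u=Vu^p+k\delta_0$, so $u-\mathbb{G}[Vu^p]-k\mathbb{G}[\delta_0]$ is harmonic on all of $\R^N$ and tends to $0$ at infinity; by Liouville's theorem it vanishes, hence
$$u=\mathbb{G}[Vu^p]+k\mathbb{G}[\delta_0]\qquad\text{in }\R^N,$$
and in particular $u\ge k\mathbb{G}[\delta_0]$, while $u_{k,V}\le u$ by minimality.

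\medskip\noindent\emph{The bound \eqref{1.4} and interior regularity.} In case $(i)$ ($p>1$, $k<k_p$) the quickest route is the barrier from Theorem \ref{teo 0}: by \eqref{1.2} and \eqref{k}, the function $M(k)\mathbb{G}[\delta_0]$, with $M(k)$ the smaller root of $M-c_2M^p=k$, dominates the whole iteration defining $u_{k,V}$, so $u_{k,V}\le M(k)\mathbb{G}[\delta_0]=c_NM(k)\,|x|^{2-N}$, which is exactly \eqref{1.4}. In case $(ii)$ ($0<p<\tfrac N{N-2}$) one must handle an arbitrary positive weak solution $u$: starting from $Vu^p\in L^1(\R^N)$ the representation formula gives $u\in L^q_{loc}$ for all $q<\tfrac N{N-2}$, and reinserting this into $u=\mathbb{G}[Vu^p]+k\mathbb{G}[\delta_0]$, using \eqref{a} and the (weighted) mapping properties of $\mathbb{G}$, strictly improves the behavior of $u$ near the origin; I expect the gain per step to be strictly positive precisely under \eqref{p} together with $p<\tfrac N{N-2}$ (in case $(i)$ the analogous role is played by $a_0<2$), so that after finitely many steps $u\lesssim|x|^{2-N}$ near $0$, and combining with the decay estimate at infinity (which uses the lower endpoint of \eqref{p}) one obtains \eqref{1.4}. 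In either case $\delta_0$ is inactive on $\R^N\setminus\{0\}$, where $u$ solves $-\Delta u=Vu^p$ with $u$ locally bounded by \eqref{1.4} and $V$ locally Lipschitz, so the usual $W^{2,q}$ and Schauder estimates make $u$ a classical solution of \eqref{eq 1.2}, and $\lim_{|x|\to\infty}u(x)=0$ follows from \eqref{1.4}. In particular $u_{k,V}$, and by the same argument every $u_{k',V}$ with $k'<k^*$, satisfies \eqref{1.4}.

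\medskip\noindent\emph{Comparison and stability.} The elementary fact I would isolate is that for $0<k<k'<k^*$,
$$u_{k',V}\ \ge\ \frac{k'}{k}\,u_{k,V}.$$
Indeed, writing $v_0=k\mathbb{G}[\delta_0]$, $v_{n+1}=\mathbb{G}[Vv_n^p]+k\mathbb{G}[\delta_0]$ for the iteration converging to $u_{k,V}$, any positive solution $u$ of $(P_{k'})$ satisfies $u\ge k'\mathbb{G}[\delta_0]=\tfrac{k'}{k}v_0$, and if $u\ge\tfrac{k'}{k}v_n$ then, using $u=\mathbb{G}[Vu^p]+k'\mathbb{G}[\delta_0]$, the inequality $(\tfrac{k'}{k})^p\ge\tfrac{k'}{k}$ (valid since $\tfrac{k'}{k}>1$ and $p\ge1$) and monotonicity of $\mathbb{G}$, one gets $u\ge\tfrac{k'}{k}v_{n+1}$; passing to the limit and taking $u=u_{k',V}$ gives the claim. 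For $0<p\le1$ (the remaining part of $(ii)$) stability is then immediate: $-\Delta u_{k,V}\ge(Vu_{k,V}^{p-1})u_{k,V}$, so the ground-state substitution $|\nabla\xi|^2-\nabla u_{k,V}\!\cdot\!\nabla(\xi^2/u_{k,V})=|\nabla\xi-\tfrac{\xi}{u_{k,V}}\nabla u_{k,V}|^2\ge0$ yields $\int_{\R^N}|\nabla\xi|^2\ge\int_{\R^N}Vu_{k,V}^{p-1}\xi^2\ge p\int_{\R^N}Vu_{k,V}^{p-1}\xi^2$ for $\xi\in\cD^{1,2}(\R^N)$ (the right-hand integral is finite by \eqref{1.4} and \eqref{p}), strictly positive for $\xi\neq0$ because $u_{k,V}\notin\cD^{1,2}(\R^N)$, and the right-hand side of \eqref{1.5} is $\le0$ when $p\le1$, so \eqref{1.5} holds trivially. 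For $p>1$ (cases $(i)$ and the rest of $(ii)$), each $u_{k',V}$ with $k'<k^*$ is semi-stable: picking $k''\in(k',k^*)$, the positive function $w:=u_{k'',V}-u_{k',V}\ge(k''-k')\mathbb{G}[\delta_0]$ is, by convexity of $t\mapsto t^p$, a supersolution of $-\Delta w=pVu_{k',V}^{p-1}w$ on $\R^N\setminus\{0\}$, so the ground-state substitution gives $\int|\nabla\xi|^2\ge p\int Vu_{k',V}^{p-1}\xi^2$ first for $\xi\in C^\infty_c(\R^N\setminus\{0\})$ and then, by density of this space in $\cD^{1,2}(\R^N)$ (a point has zero capacity) together with the $L^1(\R^N)$-bound on $Vu_{k',V}^{p-1}\xi^2$ coming from \eqref{1.4} and \eqref{p}, for all $\xi\in\cD^{1,2}(\R^N)$. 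Combining with the comparison and $p-1>0$,
$$\int_{\R^N}|\nabla\xi|^2\,dx\ \ge\ p\int_{\R^N}Vu_{k',V}^{p-1}\xi^2\,dx\ \ge\ p\Big(\tfrac{k'}{k}\Big)^{p-1}\int_{\R^N}Vu_{k,V}^{p-1}\xi^2\,dx,$$
and letting $k'\uparrow k^*$ gives $p\int_{\R^N}Vu_{k,V}^{p-1}\xi^2\le(k/k^*)^{p-1}\int_{\R^N}|\nabla\xi|^2$. Since $0<\tfrac{p-1}{p}<p-1$ and $k/k^*\in(0,1)$ we have $(k/k^*)^{p-1}\le(k/k^*)^{(p-1)/p}$, whence
$$\int_{\R^N}|\nabla\xi|^2\,dx-p\int_{\R^N}Vu_{k,V}^{p-1}\xi^2\,dx\ \ge\ \big(1-(k/k^*)^{(p-1)/p}\big)\int_{\R^N}|\nabla\xi|^2\,dx\ =\ \frac{(k^*)^{(p-1)/p}-k^{(p-1)/p}}{(k^*)^{(p-1)/p}}\int_{\R^N}|\nabla\xi|^2\,dx,$$
which is \eqref{1.5} with $c_3=(k^*)^{-(p-1)/p}$, independent of $k$; strict positivity of the left-hand side then also gives that $u_{k,V}$ is stable.

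\medskip\noindent\emph{Main obstacle.} The delicate part is the a priori bound \eqref{1.4} for a general weak solution in part $(ii)$: the iteration improving the behavior of $u$ at the origin must be run against the singular weight \eqref{a}, possibly in weighted Lebesgue spaces, and the bookkeeping showing that the exponents strictly improve — so that the procedure closes — uses the full strength of \eqref{p} with $p<\tfrac N{N-2}$. A secondary technical point is the rigorous justification of the ground-state substitution when the positive supersolution ($w$, or $u_{k,V}$ itself) does not lie in $\cD^{1,2}(\R^N)$; this is handled by truncating near the origin and invoking the vanishing capacity of a point together with the integrability of the zeroth-order term guaranteed by \eqref{1.4} and \eqref{p}.
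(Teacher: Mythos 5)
Your treatment of the stability assertions and of the quantitative gap \eqref{1.5} is correct, and it takes a genuinely different route from the paper. Where the paper (Proposition \ref{pr 3.2}) proves semi-stability of $u_{k,V}$ by a variational contradiction argument --- assuming the first eigenvalue $\lambda_1$ of the linearized operator is $\le 1$, extracting an eigenfunction via the compact embedding of Section 4, and testing against $w=u_{\hat k,V}-u_{k,V}$ --- you obtain semi-stability directly from the ground-state substitution with the positive supersolution $w$. More interestingly, for \eqref{1.5} the paper uses the supersolution $l_0u_{k',V}$ with $l_0=(k/k')^{1/p}$ to get $u_{k,V}\le (k/k')^{1/p}u_{k',V}$, whereas your induction on the iteration scheme yields the stronger comparison $u_{k,V}\le (k/k')\,u_{k',V}$ (valid since $(k'/k)^p\ge k'/k$ for $p\ge 1$); letting $k'\uparrow k^*$ then produces \eqref{1.5} with the explicit constant $c_3=(k^*)^{-(p-1)/p}$, which is cleaner than the paper's choice $k'=(k+k^*)/2$. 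Your separate handling of $0<p\le 1$ (where the right-hand side of \eqref{1.5} is nonpositive and stability follows from the ground-state substitution applied to $u_{k,V}$ itself) actually covers a case the paper's Proposition \ref{pr 3.2} formally excludes, since that proposition assumes $p>1$. The one caveat is the rigorous justification of the ground-state substitution when the supersolution is not in $\mathcal{D}^{1,2}(\R^N)$; you flag the truncation-plus-capacity argument, and the integrability of $Vu_{k,V}^{p-1}\xi^2$ does follow from \eqref{1.4}, \eqref{p} and the Hardy inequality, so this is fixable but not free.

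The genuine gap is in the pointwise bound \eqref{1.4} for an \emph{arbitrary} positive weak solution in part $(ii)$, which is the bulk of the paper's Section 3 (Proposition \ref{pr 3.1} and Lemma \ref{lm 3.1}) and which your stability argument for case $(ii)$ also relies on (you need \eqref{1.4} for every $u_{k',V}$ with $k'<k^*$, and for $k'$ near $k^*$ the barrier of Lemma \ref{lm 2.1} is not available). Near the origin your sketch matches the paper's bootstrap $s_{m}=\frac1p\frac{Ns_{m-1}}{Np-2s_{m-1}}$ and would close, but the decay at infinity is asserted, not proved: ``reinserting into $u=\mathbb{G}[Vu^p]+k\mathbb{G}[\delta_0]$'' does not by itself improve decay, because the Green operator on all of $\R^N$ has no global regularizing property (its kernel is not integrable at infinity), so the local $L^q$ gains that drive the bootstrap at the origin say nothing about the rate at which $u$ vanishes. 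The paper has to work for this: it builds explicit barriers $|x|^{2-N}-|x|^{2-a_\infty}$, $|x|^{\tau_j}$ with $\tau_j=2-a_\infty+p\tau_{j-1}$, and applies the comparison principle in the exterior domain, splitting into the three regimes $a_\infty>N$, $a_\infty\in(2,N]$ and $a_\infty\in(0,2]$; the last regime requires the further device of estimating $\mathbb{G}[Vu^p](x)$ by splitting the integral over $B_{r_0}(x)$ with $r_0=\frac12|x|^{a_\infty/N}$ and iterating the resulting decay exponents $\gamma_j$. None of this is visible in your outline, and ``the decay estimate at infinity (which uses the lower endpoint of \eqref{p})'' is exactly the step that needs to be supplied before part $(ii)$ --- and hence the full theorem --- can be considered proved.
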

\medskip
We note that in $(i)$ and $(ii)$ of Theorem \ref{teo 0a}, the parameter $k$ is bounded by $k_p$ and $k^*$ respectively. It is not clear if $k_p< k^*$.
We also remark that \eqref{1.4} implies that the singularity and the decays of $u$ at the origin and infinity respectively are the same as the fundamental solution.

The second solution of ($P_k$) will be constructed by Mountain Pass Theorem. Indeed, we will look for
critical points of the functional
\begin{equation}\label{1.6}
E(v)=\frac12\int_{\R^N}|\nabla v|^2\,dx-\int_{\R^N}VF(u_{k,V},v_+)\,dx
\end{equation}
in $\mathcal{D}^{1,2}(\R^N)$, where $t_+=\max\{0,t\}$,
$$
 F(s,t)=\frac1{p+1}\left[(s+t_+)^{p+1}-s^{p+1}-(p+1)s^pt_+\right].
$$
To assure  that the functional $E$ is well defined, we establish the
embedding
\begin{equation}\label{1.7}
\mathcal{D}^{1,2}(\R^N)\hookrightarrow L^2(\R^N, V_0u_{k,V}^{p-1}dx)
\end{equation}
and
\begin{equation}\label{1.8}
\mathcal{D}^{1,2}(\R^N)\hookrightarrow L^{p+1}(\R^N, V_0dx),
\end{equation}
both of them are compact if
\begin{equation}\label{rm pq}
p+1\in(2^*(a_\infty),2^*(a_0))\cap [1,2^*),
\end{equation}
where $2^*(t)=\frac{2N-2t}{N-2}$ with $t\in\R$ and $2^*=2^*(0)$. Therefore, we may verify that the functional $E$
satisfies the $(PS)_c$ condition. Furthermore, we may build the mountain pass structure by the stability of the minimal solution.

Taking into account the range of $p$ for the existence of the minimal solution, we suppose
\begin{equation}\label{pq}
p\in (\frac{N-a_\infty}{N-2},\frac{N-a_0}{N-2})\cap  (\max\{ 2^*(a_\infty)-1,0\},  \min\{2^*(a_0)-1, 2^*-1\}).
\end{equation}
The intersection of intervals in \eqref{pq} is not empty if we assume further that
\begin{equation}\label{pqa}
a_0<2,\ a_\infty>\max\{0, 1+\frac{a_0}2\}.
\end{equation}

Our result of the existence of the second solution is stated as follows.
\begin{theorem}\label{teo 1}
Suppose that  the function $V$  satisfies (\ref{a}) with $a_0$ and $a_\infty$ given in \eqref{pqa},  $p>1$ satisfies \eqref{pq} and $k_p$ is given by (\ref{k}) .
Then, for $k\in(0,k_p)$,  problem ($P_k$) admits a weak  solution $u>u_{k,V}$.
 Moreover, both $u$ and $u_{k,V}$ are classical solutions of (\ref{eq 1.2}).
\end{theorem}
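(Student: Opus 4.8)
\noindent\emph{Proof strategy.}
The plan is to produce the second solution in the form $u=u_{k,V}+v$, where $u_{k,V}$ is the minimal solution of Theorem~\ref{teo 0} and $v>0$ is a nontrivial critical point of the functional $E$ in \eqref{1.6}. Indeed, if $-\Delta v=V[(u_{k,V}+v_+)^p-u_{k,V}^p]$ in $\R^N$, then $-\Delta(u_{k,V}+v)=Vu_{k,V}^p+k\delta_0+V[(u_{k,V}+v)^p-u_{k,V}^p]=V(u_{k,V}+v)^p+k\delta_0$; and since $\partial_tF(s,t)=(s+t_+)^p-s^p$, this equation is exactly $E'(v)=0$. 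Under \eqref{pqa} the range \eqref{pq} is nonempty, so \eqref{rm pq} holds; together with the pointwise bounds $0\le F(s,t)\le C(s^{p-1}t_+^2+t_+^{p+1})$ (from the Taylor representation $F(s,t)=\tfrac p2(s+\xi)^{p-1}t^2$, $\xi\in(0,t)$, for $t>0$) and $F(s,t)\ge\tfrac1{p+1}t_+^{p+1}$ (because $(s+t)^{p+1}-s^{p+1}-(p+1)s^pt-t^{p+1}$ vanishes at $s=0$ and is nondecreasing in $s$ by convexity of $r\mapsto r^{p}$), the compact embeddings \eqref{1.7}--\eqref{1.8} show that $E\in C^1(\mathcal{D}^{1,2}(\R^N))$.

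First I would verify the mountain pass geometry. That $v=0$ is a strict local minimum is the point where the stability of $u_{k,V}$ enters: splitting $\R^N$ into $\{v_+\le\e u_{k,V}\}$, on which $F(u_{k,V},v_+)\le\tfrac p2(1+\e)^{p-1}u_{k,V}^{p-1}v_+^2$, and its complement, on which $F(u_{k,V},v_+)\le\tfrac1{p+1}(1+\e^{-1})^{p+1}v_+^{p+1}$, and using \eqref{1.5} in the form $p\int_{\R^N}V_0u_{k,V}^{p-1}\xi^2\,dx\le(1-\mu)\int_{\R^N}|\nabla\xi|^2\,dx$ with $\mu:=c_3[(k^*)^{\frac{p-1}p}-k^{\frac{p-1}p}]>0$, one obtains for $\e$ small and some $\mu'\in(0,1)$
\[
E(v)\ \ge\ \frac{\mu'}2\int_{\R^N}|\nabla v|^2\,dx\ -\ C_\e\Big(\int_{\R^N}|\nabla v|^2\,dx\Big)^{\frac{p+1}2},
\]
which is bounded below by a positive constant on a small sphere $\int_{\R^N}|\nabla v|^2\,dx=\rho^2$ since $p+1>2$. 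On the other hand, for a fixed $0\le\vp\in C^\infty_c(\R^N)$, $\vp\not\equiv0$, the lower bound on $F$ gives $E(t\vp)\le\tfrac{t^2}2\int|\nabla\vp|^2-\tfrac{t^{p+1}}{p+1}\int V\vp^{p+1}\to-\infty$, so there is $e$ with $E(e)<0$. Hence $c:=\inf_{\gamma}\max_{s\in[0,1]}E(\gamma(s))$, the infimum over $\gamma\in C([0,1],\mathcal{D}^{1,2}(\R^N))$ with $\gamma(0)=0$, $\gamma(1)=e$, satisfies $0<c<\infty$.

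The crux is the $(PS)_c$ condition, and again it rests on the strict stability of $u_{k,V}$. One first establishes an Ambrosetti--Rabinowitz-type inequality: for every $\theta\in(2,p+1)$ there is $C_\theta>0$ with $C_\theta\to0$ as $\theta\to2^+$ such that
\[
\theta F(s,t)\ \le\ t\,\partial_tF(s,t)+C_\theta\,s^{p-1}t^2\qquad\text{for all }s,t\ge0.
\]
This follows from an elementary analysis of $F$: the ratio $t\,\partial_tF(s,t)/F(s,t)$ depends only on $t/s$, is $>2$ on $(0,\infty)$ (because $h(t):=t\,\partial_tF(s,t)-2F(s,t)$ has $h(0)=h'(0)=0$, $h''(t)=p(p-1)t(s+t)^{p-2}\ge0$) and tends to $2$, resp.\ $p+1$, as $t/s\to0$, resp.\ $\infty$; hence $\theta F\le t\,\partial_tF$ once $t\ge r_0(\theta)s$ with $r_0(\theta)\to0$, while on $\{t<r_0(\theta)s\}$ the left side, after subtracting $t\,\partial_tF$, is $\le p\,t^2\big[\tfrac\theta2(1+r_0(\theta))^{p-1}-1\big]s^{p-1}$ with coefficient $\to0$. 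Then for a $(PS)_c$ sequence $(v_n)$,
\[
c+o(1)+o\big(\|\nabla v_n\|_{L^2}\big)=E(v_n)-\tfrac1\theta E'(v_n)v_n\ \ge\ \Big(\tfrac12-\tfrac1\theta-\tfrac{C_\theta(1-\mu)}{\theta p}\Big)\int_{\R^N}|\nabla v_n|^2\,dx,
\]
using \eqref{1.5} (and $V\le V_0$) to control $\int V u_{k,V}^{p-1}(v_n)_+^2$; for $\theta$ close enough to $2$ the bracket is positive, so $(v_n)$ is bounded. Passing to a weak limit $v_n\rightharpoonup v$, the compactness of \eqref{1.7}--\eqref{1.8} forces the nonlinear term to converge strongly, whence $E'(v)=0$, $\int|\nabla v_n|^2\to\int|\nabla v|^2$, so $v_n\to v$ in $\mathcal{D}^{1,2}(\R^N)$ and $E(v)=c>0=E(0)$; in particular $v\not\equiv0$.

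Finally, testing $E'(v)=0$ with $v_-\in\mathcal{D}^{1,2}(\R^N)$ and noting $\partial_tF(u_{k,V},v_+)\,v_-\equiv0$ gives $\int_{\R^N}|\nabla v_-|^2\,dx=0$, so $v\ge0$; being a nonnegative, nontrivial superharmonic function on $\R^N$, $v>0$ by the strong maximum principle. Setting $u:=u_{k,V}+v>u_{k,V}$, the weak formulations of $u_{k,V}$ (Theorem~\ref{teo 0}) and of $v$ add up to show that $u$ is a weak solution of ($P_k$). For the regularity assertion, $u_{k,V}$ is a classical solution of \eqref{eq 1.2} by Theorem~\ref{teo 0a}$(i)$; on any ball away from the origin $V$ is Lipschitz and $u_{k,V}$ is bounded and classical, so $v$ solves $-\Delta v=Vu^p-Vu_{k,V}^p$ with right-hand side in $L^q_{\mathrm{loc}}(\R^N\setminus\{0\})$, and elliptic bootstrapping yields $v\in C^{2,\alpha}_{\mathrm{loc}}(\R^N\setminus\{0\})$; moreover $v=\mathbb{G}[Vu^p-Vu_{k,V}^p]$, and the decay of $V$ forces $v(x)\to0$ as $|x|\to+\infty$, so $u$ too is a classical solution of \eqref{eq 1.2}. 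The main obstacle is the $(PS)_c$ step: since $F(u_{k,V},\cdot)$ is not a pure power and $u_{k,V}$ is singular at the origin, the usual Ambrosetti--Rabinowitz argument breaks down, and the resulting error must be absorbed by the quantitative stability inequality \eqref{1.5} with a delicate choice of $\theta$.
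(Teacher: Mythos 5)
Your overall route is the same as the paper's: translate by the minimal solution, study the functional $E$ of \eqref{1.6} on $\mathcal{D}^{1,2}(\R^N)$, obtain the mountain-pass geometry and the boundedness of $(PS)_c$ sequences from the quantitative stability \eqref{1.5} of $u_{k,V}$, and get compactness from the weighted embeddings \eqref{1.7}--\eqref{1.8}; the positivity and regularity endgame is also the paper's. The one substantive divergence is that you derive the Ambrosetti--Rabinowitz-type inequality by hand instead of quoting it, and there the constants, as you have written them, do not close. Your two-region estimate yields $C_\theta=p\bigl[\tfrac{\theta}{2}(1+r_0(\theta))^{p-1}-1\bigr]$, with $r_0(\theta)$ the threshold where $t\,\partial_tF/F$ reaches $\theta$; since $t\,\partial_tF/F=2+\tfrac{p-1}{3}\tfrac{t}{s}+O\bigl((t/s)^2\bigr)$ near $t/s=0$, one has $r_0(\theta)\sim\tfrac{3(\theta-2)}{p-1}$ and hence $C_\theta/(\theta-2)\to\tfrac{7p}{2}$ as $\theta\to2^+$. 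Your final bracket $\tfrac12-\tfrac1\theta-\tfrac{C_\theta(1-\mu)}{\theta p}$ is therefore asymptotically $\tfrac{\theta-2}{2\theta}\bigl[1-7(1-\mu)\bigr]$: the gain and the loss are \emph{both} of order $\theta-2$, so ``choose $\theta$ close enough to $2$'' does not make it positive unless $\mu>6/7$, which is not guaranteed ($\mu=c_3[(k^*)^{\frac{p-1}p}-k^{\frac{p-1}p}]$ is merely some positive number). What you actually need is $C_\theta\le\tfrac{(\theta-2)p}{2(1-\mu)}$.

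The gap is repairable inside your own scheme: on the region $t\le r_0(\theta)s$, estimate $\theta F-t\,\partial_tF\le(\theta-2)F\le(\theta-2)\tfrac{p}{2}(1+r_0(\theta))^{p-1}s^{p-1}t^2$, using $t\,\partial_tF\ge2F$ (which you already proved via $h''\ge0$) rather than the cruder $t\,\partial_tF\ge ps^{p-1}t^2$. This gives $C_\theta/(\theta-2)\to\tfrac{p}{2}$, and the bracket becomes $\tfrac{\theta-2}{2\theta}\bigl[\mu+O(\theta-2)\bigr]>0$. That limiting constant $\tfrac p2$ is precisely the one in the inequality $f(s,t)t-(2+c_p)F(s,t)\ge-\tfrac{c_pp}{2}s^{p-1}t^2$, $c_p=\min\{1,p-1\}$, which the paper imports from \cite{NS} and pairs with \eqref{1.5} so that the two terms combine exactly into a multiple of the stability quadratic form. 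Everything else in your proposal --- the geometry via the splitting $\{v_+\le\e u_{k,V}\}$, strong convergence of the $(PS)_c$ sequence from the compact embeddings, $v\ge0$ by testing with $v_-$ and then $v>0$ by the strong maximum principle, and the local bootstrap plus decay for the classical regularity of $u=u_{k,V}+v$ --- matches the paper's argument.
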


Although we are not able to show $k_p<k^*$, we may prove that if $p$ satisfies (\ref{1.3}), problem ($P_k$) admits a  solution $u$ such that $u>u_{k,V}$ for all $k\in (0,k^*)$.

If $V$ is radially symmetric, the range of $p$ can be improved to
\begin{equation}\label{pq1}
p\in (\frac{N-a_\infty}{N-2},\frac{N-a_0}{N-2})\cap  (\max\{ 2^*(a_\infty)-1,0\}, 2^*(a_0)-1).
\end{equation}

Finally,  we have the following result for $V$ being radial.
\begin{theorem}\label{teo 2}
 Suppose that the function $V$ is radially symmetric satisfying (\ref{a}) with $a_0$ and $a_\infty$ given in \eqref{pqa},
  $p>1$ satisfies \eqref{pq1} and $k_p$ is given by (\ref{k}).
Then, for $k\in(0,k_p)$,  problem ($P_k$) admits a radially symmetric  solution $u>u_{k,V}$, and both $u$ and $u_{k,V}$ are classical solutions of (\ref{eq 1.2}).

\end{theorem}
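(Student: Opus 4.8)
The plan is to run the variational argument behind Theorem \ref{teo 1} inside the radial subspace $\mathcal{D}^{1,2}_{\mathrm{rad}}(\R^N)$ of radially symmetric functions in $\mathcal{D}^{1,2}(\R^N)$, where the subcritical restriction $p+1<2^*$ disappears. Since $V$ is radial and, by Theorem \ref{teo 0}$(iii)$, so is the minimal solution $u_{k,V}$, the functional $E$ in \eqref{1.6} is invariant under the natural $O(N)$-action on $\mathcal{D}^{1,2}(\R^N)$; once one checks that $E\in C^1$ (which follows from the embeddings below), the principle of symmetric criticality reduces matters to producing a nontrivial critical point of the restriction $E|_{\mathcal{D}^{1,2}_{\mathrm{rad}}(\R^N)}$. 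For such a critical point $v$, the function $u=u_{k,V}+v_+$ is a weak solution of ($P_k$) exactly as in the proof of Theorem \ref{teo 1}.

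The place where radial symmetry is genuinely used is in re-establishing the compactness of the embeddings \eqref{1.7} and \eqref{1.8} on $\mathcal{D}^{1,2}_{\mathrm{rad}}(\R^N)$ under the wider range \eqref{pq1}. The key tool is the radial decay estimate
$$|v(x)|\le C_N\,|x|^{-(N-2)/2}\,\norm{v}_{\mathcal{D}^{1,2}(\R^N)},\qquad v\in\mathcal{D}^{1,2}_{\mathrm{rad}}(\R^N),\ x\neq 0,$$
together with the sharp bound \eqref{1.4} for $u_{k,V}$, available since $a_0<2$, $p>1$ and $k\in(0,k_p)\subset(0,k^*)$ (Theorem \ref{teo 0a}$(i)$). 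One splits each weighted integral over $\{|x|<\epsilon\}$, a fixed annulus, and $\{|x|>R\}$: on the annulus compactness is Rellich's theorem; near the origin, inserting the radial bound and $u_{k,V}(x)\le C|x|^{2-N}$ reduces matters to the finiteness and uniform smallness of $\int_0^\epsilon r^{N-1-a_0-(N-2)(p+1)/2}\,dr$ and $\int_0^\epsilon r^{N-1-a_0-(N-2)p}\,dr$, which hold precisely when $p+1<2^*(a_0)$ and $p<\tfrac{N-a_0}{N-2}$; near infinity the decay $|x|^{-a_\infty}$ of $V_0$ together with the same radial bound leaves $\int_R^\infty r^{N-1-a_\infty-(N-2)(p+1)/2}\,dr$ and $\int_R^\infty r^{N-1-a_\infty-(N-2)p}\,dr$, whose tails are small exactly when $p+1>2^*(a_\infty)$ and $p>\tfrac{N-a_\infty}{N-2}$. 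Hence \eqref{pq1} is exactly the range in which both embeddings are compact, with no constraint $p+1<2^*$.

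With these compact embeddings the remaining steps copy the proof of Theorem \ref{teo 1} verbatim in $\mathcal{D}^{1,2}_{\mathrm{rad}}(\R^N)$. The stability estimate \eqref{1.5}, valid for $k\in(0,k_p)$, together with $F(s,t)=\tfrac{p}{2}s^{p-1}t^2+o(t^2)$ as $t\to0^+$, makes $0$ a strict local minimum of $E$ with $E(0)=0$; since $p>1$, $F(u_{k,V},\cdot)$ is superlinear and $E(tw)\to-\infty$ as $t\to+\infty$ along a fixed nonnegative radial $w$, so $E|_{\mathcal{D}^{1,2}_{\mathrm{rad}}}$ has the mountain pass geometry and its critical level $c$ is positive. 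A Palais--Smale sequence at level $c$ is bounded (superlinearity, $p>1$) and, by the compactness just proved, has a strongly convergent subsequence, so $(PS)_c$ holds and there is a critical point $v$ at level $c$. As $c>0$, $v\not\equiv0$; testing the Euler--Lagrange equation $-\Delta v=V[(u_{k,V}+v_+)^p-u_{k,V}^p]\ge0$ shows $v\in\mathcal{D}^{1,2}(\R^N)$ is superharmonic, hence $v\ge0$, and then the strong maximum principle gives $v>0$. Thus $u:=u_{k,V}+v>u_{k,V}$ is a radial weak solution of ($P_k$), and both $u$ and $u_{k,V}$ are classical solutions of \eqref{eq 1.2} away from the origin by the regularity established for Theorems \ref{teo 0a}$(i)$ and \ref{teo 1}.

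The main obstacle I anticipate is the radial compactness step, specifically the passage from finiteness of the boundary integrals to uniform smallness of their tails along a bounded sequence: near infinity one needs the radial decay $|x|^{-(N-2)/2}$ to beat the slowly decaying measure $V_0\,dx$, while near the origin one must absorb simultaneously the singularity $|x|^{-a_0}$ of $V_0$ and the singularity $|x|^{(2-N)(p-1)}$ of $u_{k,V}^{p-1}$; both are controlled exactly at the endpoints of \eqref{pq1}, so the estimates are sharp and must be done carefully. A secondary, routine point is verifying that $E\in C^1(\mathcal{D}^{1,2}(\R^N))$ in this weighted framework so that symmetric criticality applies, and noting that $c>0$ forces $v_+\not\equiv0$, which is what yields the strict inequality $u>u_{k,V}$.
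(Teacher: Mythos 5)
Your proposal is correct and follows essentially the same route as the paper: restrict to the radial space $\mathcal{D}_r^{1,2}(\R^N)$, where the compact embeddings \eqref{1.7}--\eqref{1.8} hold under the wider range \eqref{pq1} without the constraint $p+1<2^*$, and then rerun the Mountain Pass argument of Theorem \ref{teo 1}. The paper merely asserts the radial compact embedding and ``sketches'' the rest, so your Strauss-type decay estimate $|v(x)|\le C|x|^{-(N-2)/2}\norm{v}_{\mathcal{D}^{1,2}(\R^N)}$ and the explicit appeal to symmetric criticality supply details the paper leaves implicit, and they are the right ones.
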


The paper is organized as follows. In Section 2, we  show the existence of  the minimal solution of ($P_k$).   Section  3 is devoted
the regularity and stability of the minimal solution. Finally, in Section 4 we find the second solution of  ($P_k$) by the  Mountain Pass theorem.

\section{Minimal solution}

In this section, we show the existence of the minimal solution for ($P_k$).  To this end, we construct a monotone sequence of approximating solutions by the iterating technique and bound it by a super-solution. The suitable super-solution will be constructed based on the following result.

\begin{lemma}\label{lm 2.1}
Assume that the  function $V$ satisfies (\ref{a}) with $a_0< N,\, a_\infty>0$, and that
$p>0$ satisfies (\ref{p}). There holds
\begin{equation}\label{2.1}
\mathbb{G}[V\mathbb{G}^p[\delta_0]]\le c_2\mathbb{G}[\delta_0]\quad {\rm in}\quad \R^N\setminus\{0\},
\end{equation}
where $c_2$ linearly depends on $c_1$.

\end{lemma}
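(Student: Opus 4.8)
The plan is to reduce \eqref{2.1} to a convolution estimate for Riesz kernels and then to prove that estimate by decomposing $\R^N$ according to the size of $|y|$ relative to $|x|$. First I would use that $\mathbb{G}[\delta_0]$ is the fundamental solution of $-\Delta$, i.e. $\mathbb{G}[\delta_0](x)=c_N|x|^{2-N}$ for some $c_N>0$, so that $\mathbb{G}^p[\delta_0](y)=c_N^p|y|^{-(N-2)p}$. Together with \eqref{a} this gives a constant $c>0$ proportional to $c_1$ such that
\begin{equation*}
V(y)\,\mathbb{G}^p[\delta_0](y)\le c\,\phi(y),\qquad \phi(y):=\frac{|y|^{-(N-2)p-a_0}}{1+|y|^{a_\infty-a_0}},
\end{equation*}
so it suffices to prove $\mathbb{G}[\phi](x)\le C|x|^{2-N}$ on $\R^N\setminus\{0\}$. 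Put $\sigma_0:=(N-2)p+a_0$ and $\sigma_\infty:=(N-2)p+a_\infty$. Since $1+|y|^{a_\infty-a_0}\ge\max\{1,|y|^{a_\infty-a_0}\}$, one has the two pointwise bounds $\phi(y)\le|y|^{-\sigma_0}$ and $\phi(y)\le|y|^{-\sigma_\infty}$ for every $y\ne0$, and condition \eqref{p} is precisely the statement $\sigma_0<N<\sigma_\infty$; in particular $\phi\in L^1(\R^N)$.

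For fixed $x\ne0$ I would write $\mathbb{G}[\phi](x)=c_N\int_{\R^N}|x-y|^{2-N}\phi(y)\,dy$ and split the integral over $A_1=\{|y|<|x|/2\}$, $A_2=\{|x|/2\le|y|\le2|x|\}$ and $A_3=\{|y|>2|x|\}$. On $A_1$ one has $|x-y|\ge|x|/2$, so that piece is $\le 2^{N-2}|x|^{2-N}\int_{A_1}\phi$, which is $\le 2^{N-2}\|\phi\|_{L^1(\R^N)}|x|^{2-N}$ when $|x|\ge1$ and $\le C|x|^{2-\sigma_0}\le C|x|^{2-N}$ when $|x|\le1$ (using $\phi\le|y|^{-\sigma_0}$ and $\sigma_0<N$). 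On $A_3$ one has $|x-y|\ge|y|/2$, so that piece is $\le 2^{N-2}c_N\int_{A_3}|y|^{2-N}\phi(y)\,dy$; splitting the $y$-integral at $|y|=1$ and using $\phi\le|y|^{-\sigma_0}$ on $\{|y|\le1\}$ and $\phi\le|y|^{-\sigma_\infty}$ on $\{|y|\ge1\}$, one checks for $|x|\le1$ and for $|x|\ge1$ separately (intermediate ranges of $|x|$ being trivial, as all integrands are then bounded) that it is $\le C|x|^{2-N}$, the tail being controlled by $\sigma_\infty>N$ and the annular part by $\sigma_0<N$, with the radial integral $\int_{2|x|}^{1}r^{1-\sigma_0}\,dr$ contributing at most a bounded constant plus $|x|^{2-\sigma_0}$ (a logarithm if $\sigma_0=2$). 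On $A_2$ the weight satisfies $\phi(y)\le C|x|^{-\sigma_0}$ if $|x|\le1$ and $\phi(y)\le C|x|^{-\sigma_\infty}$ if $|x|\ge2$, while $A_2\subset B_{3|x|}(x)$ yields $\int_{A_2}|x-y|^{2-N}\,dy\le\int_{B_{3|x|}(x)}|x-y|^{2-N}\,dy\le C|x|^2$; hence that piece is $\le C|x|^{2-\sigma_0}\le C|x|^{2-N}$ for $|x|\le1$ and $\le C|x|^{2-\sigma_\infty}\le C|x|^{2-N}$ for $|x|\ge2$, the band $1\le|x|\le2$ again being trivial.

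Summing the three pieces gives $\mathbb{G}[\phi](x)\le C|x|^{2-N}$ with $C$ depending only on $N,p,a_0,a_\infty$, whence $\mathbb{G}[V\mathbb{G}^p[\delta_0]]\le c\,\mathbb{G}[\phi]\le(cC/c_N)\,\mathbb{G}[\delta_0]$, so the constant $c_2=cC/c_N$ is proportional to $c$, hence linear in $c_1$, as claimed. I expect no single inequality to be difficult; the real work is the bookkeeping --- in each region and each range of $|x|$ one must decide which of $\sigma_0,\sigma_\infty$ governs the integral and then check that the resulting power of $|x|$ is $\le 2-N$ when $|x|\ge1$ (valid because $2-\sigma_\infty<2-N$, i.e. $\sigma_\infty>N$, the left endpoint of \eqref{p}) and $\ge 2-N$ when $|x|\le1$ (valid because $2-\sigma_0>2-N$, i.e. $\sigma_0<N$, the right endpoint of \eqref{p}). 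The left endpoint is moreover what makes all the integrals converge at infinity and gives $\phi\in L^1(\R^N)$. The most delicate spot is the subregion $\{2|x|\le|y|\le1\}$ of $A_3$ when $\sigma_0\ge2$, where the radial integral produces $|x|^{2-\sigma_0}$ (or a logarithm); but this is still dominated by $|x|^{2-N}$ for $|x|\le1$ exactly because $\sigma_0<N$.
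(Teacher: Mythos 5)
Your proposal is correct and follows essentially the same route as the paper: bound $V\mathbb{G}^p[\delta_0]$ by the two-power radial weight $|y|^{-(N-2)p-a_0}(1+|y|^{a_\infty-a_0})^{-1}$, observe that \eqref{p} is exactly $(N-2)p+a_0<N<(N-2)p+a_\infty$, and estimate the resulting Riesz convolution by decomposing $\R^N$ according to the position of $y$ relative to the two singularities $0$ and $x$, treating $|x|\le1$ and $|x|\ge1$ separately. The only cosmetic difference is that the paper first rescales $y\mapsto|x|y$ so that the splitting is into $B_{1/2}(0)$, $B_{1/2}(e_x)$ and their complement rather than your annuli $A_1,A_2,A_3$; the region-by-region bookkeeping is the same.
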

\begin{proof}  Note that
\begin{equation}\label{2.2}
\mathbb{G}[\delta_0](x)=\frac{c_N}{|x|^{N-2}},
\end{equation}
by the assumption for $p$, we have
\begin{equation}\label{2.3}
V(x)\mathbb{G}^p[\delta_0](x) \le   \frac{c_N^pc_1}{|x|^{(N-2)p+a_0}(1+|x|^{a_\infty-a_0})},\quad\forall x\in\R^N\setminus\{0\},
\end{equation}
where $c_N>0$ is the normalized constant depending only on $N$. This implies
$$V \mathbb{G}^p[\delta_0]\in L^1(\R^N).$$
We deduce by \eqref{2.2} and \eqref{2.3} that
\begin{eqnarray*}
&&\mathbb{G}[V\mathbb{G}^p[\delta_0]](x)\\  &\le& c_N^{p+1}c_1\int_{\R^N}\frac1{|x-y|^{N-2}}\frac1{1+|y|^{a_\infty-a_0}}\frac1{|y|^{(N-2)p+a_0}}\,dy  \\
    &=&   c_N^{p+1}c_1|x|^{2-(N-2)p-a_0} \int_{\R^N}\frac1{|e_x-y|^{N-2}}\frac1{1+|x|^{a_\infty-a_0}|y|^{a_\infty-a_0}}\frac1{|y|^{(N-2)p+a_0}}\,dy
    \\&:=&c_N^{p+1}c_1|x|^{2-(N-2)p-a_0}\int_{\R^N} \Phi(x,y)\,dy,
\end{eqnarray*}
where $e_x=\frac{x}{|x|}$.

Now, we estimate $\int_{\R^N} \Phi(x,y) dy$. We divide it into two cases $(i)$ $|x|\ge 1$ and $(ii)$ $|x|\le 1$ to discuss.

In the case $(i)$ $|x|\ge 1$,  by \eqref{p}, $(N-2)p+a_0<N$, we have
\begin{eqnarray*}
&&\int_{B_{\frac12}(0) } \Phi(x,y)\,dy\\
  &\le& c_4 \int_{B_{\frac12}(0) } \frac1{1+|x|^{a_\infty-a_0}|y|^{a_\infty-a_0}} \frac1{|y|^{(N-2)p+a_0}}\,dy  \\
   &= &   c_4 |x|^{(N-2)p+a_0-N}\int_{B_{\frac{|x|}2}(0) } \frac1{1+|z|^{a_\infty-a_0}} \frac1{|z|^{(N-2)p+a_0}}\,dz
   \\&\le & c_4 |x|^{(N-2)p+a_0-N}\left(\int_{B_{\frac12}(0)}  \frac{dz}{|z|^{(N-2)p+a_0}}+\int_{B_{\frac{|x|}2}(0)\setminus B_{\frac12}(0) }  \frac{dz}{|z|^{(N-2)p+a_\infty}}\right)
    \\&\le & c_5\left(|x|^{(N-2)p+a_0-N}+|x|^{a_0-a_\infty}\right),
\end{eqnarray*}
where $c_4,c_5>0$.

If $y\in B_{\frac12}(e_x)$, we have
$$
\frac1{1+|x|^{a_\infty-a_0}|y|^{a_\infty-a_0}}\frac1{|y|^{(N-2)p+a_0}}\le c_6|x|^{a_0-a_\infty},
$$
 and then
 \begin{eqnarray*}
\int_{B_{\frac12}(e_x) } \Phi(x,y)\, dy  &\le& c_7 |x|^{a_0-a_\infty}\int_{B_{\frac12}(e_x) }\frac1{|e_x-y|^{N-2}} dy\le c_8 |x|^{a_0-a_\infty},
\end{eqnarray*}
where $c_6,c_7,c_8>0$.

If $y\in \R^N\setminus(B_{\frac12}(0)\cap B_{\frac12}(e_x))$, we have $(N-2)(p+1)+a_\infty>N$ by \eqref{p}. Therefore,
\[
\int_{\R^N\setminus(B_{\frac12}(0)\cap B_{\frac12}(e_x)) } \Phi(x,y)\, dy  \le c_9 |x|^{a_0-a_\infty}\int_{\R^N\setminus B_1(0)  } \frac{dy}{|y|^{(N-2)(p+1)+a_\infty} }
\le c_{10} |x|^{a_0-a_\infty},
\]
where $c_9,c_{10}>0$.

By the assumption on $p$, $(N-2)p+ a_\infty\ge N$. We conclude that there exists $c_{11}>0$ such that for $|x|\ge1$,
\begin{equation}\label{2.4}
\mathbb{G}[V\mathbb{G}^p[\delta_0]](x)\le c_{11}\max\{|x|^{2-N}, |x|^{2-(N-2)p -a_\infty}\}\le c_{11}|x|^{2-N}.
\end{equation}

\bigskip

Next, we treat the case $(ii)$ $|x|\le 1$.

Apparently, there exist $c_{12},c_{13}>0$ such that
\begin{eqnarray*}
\int_{B_{\frac12}(0) } \Phi(x,y)\, dy  &\le& c_{12} \int_{B_{\frac12}(0) }\frac1{|y|^{(N-2)p+a_0}}\, dy\le c_{13}.
\end{eqnarray*}
By (\ref{p}), there are $c_{14},c_{15}>0$ such that
$$\int_{B_{\frac12}(e_x) } \Phi(x,y)\, dy  \le c_{14} \int_{B_{\frac12}(e_x) }\frac1{|e_x-y|^{N-2}}\,dy\le c_{15}. $$

For $y\in \R^N\setminus(B_{\frac12}(0)\cap B_{\frac12}(e_x))$, we have that
 \begin{eqnarray*}
&&\int_{\R^N\setminus(B_{\frac12}(0)\cap B_{\frac12}(e_x)) } \Phi(x,y) \,dy\\
  &\le& c_{16} \int_{\R^N\setminus B_1(0) } \frac1{1+|x|^{a_\infty-a_0}|y|^{a_\infty-a_0}} \frac1{|y|^{(N-2)p+a_0+N-2}}\,dy  \\
   &\le &   c_{16} |x|^{(N-2)p+a_0-2}\int_{\R^N\setminus B_1(0) }  \frac{dz}{|z|^{(N-2)p+a_0+N-2}(1+|z|^{a_\infty-a_0})}
   \\&\le & c_{17} |x|^{(N-2)p+a_0-2},
\end{eqnarray*}
where $c_{16},c_{17}>0$.

Consequently, there exists $c_{18}>0$ such that for $|x|\le 1$,
\begin{equation}\label{2.5}
\mathbb{G}[V\mathbb{G}^p[\delta_0]](x)\le  c_{18}|x|^{2-N}.
\end{equation}

Therefore, the assertion follows by (\ref{2.4}) and (\ref{2.5}).
\end{proof}

\medskip

Now we are ready to prove Theorem \ref{teo 0}.
\medskip

\noindent {\bf Proof of Theorem \ref{teo 0}.}   First, we prove $(i)$. We define the iterating sequence:
$$v_0:= k \mathbb{G}[\delta_0]>0,$$
and
\begin{eqnarray}\label{2.6}
 v_n  =   \mathbb{G}[Vv_{n-1}^p]+ k \mathbb{G}[\delta_0].
\end{eqnarray}
Observing that
$$v_1= \mathbb{G}[Vv_0^p]+ k \mathbb{G}[\delta_0]>v_0,$$
and assuming that
$$
v_{n-1}(x)\ge  v_{n-2}(x),\quad x\in\R^N\setminus\{0\},
$$
we deduce that
\begin{eqnarray}\label{2.7}
 v_n  =   \mathbb{G}[Vv_{n-1}^p]+ k \mathbb{G}[\delta_0]\ge \mathbb{G}[Vv_{n-2}^p]+ k \mathbb{G}[\delta_0]=v_{n-1}.
\end{eqnarray}
Thus, the sequence $\{v_n\}_n$ is  increasing with respect to $n$.
Moveover, we have that
\begin{equation}\label{2.8}
\int_{\R^N} v_n(-\Delta)  \xi dx =\int_{\R^N}  Vv_{n-1}^p\xi dx +k\xi(0), \quad \forall \xi\in C^{1,1}_c(\R^N).
\end{equation}

Now, we build an upper bound for $\{v_n\}_n$.  For $t>0$, denote
$$w_t=t k^p\mathbb{G}[V\mathbb{G}[\delta_0]^p]+ k\mathbb{G}[\delta_0]\le (c_2t k^p+ k)\mathbb{G}[\delta_0],$$
where $c_2>0$ is from Lemma \ref{lm 2.1},
 then
\begin{eqnarray*}
\mathbb{G}[V w_t^p]+ k\mathbb{G}[\delta_0] \le    (c_2t k^p+ k)^p\mathbb{G}[V\mathbb{G}[\delta_0]^p] + k \mathbb{G}[\delta_0]
   \le  w_t
\end{eqnarray*}
if
\begin{equation}\label{2.9}
 (c_2t k^{p-1}+1)^p\le t.
\end{equation}

Now, we choose $t$ such that \eqref{2.9} holds.

If $p>1$, since the function $f(t) = (\frac1p(\frac{p-1}p)^{p-1}t+1)^p$ intersects the line $g(t) = t$ at the unique
point $t_p$, we may choose that
\begin{equation}\label{2.10}
 c_2 k^{p-1}\le \frac1p\left(\frac{p-1}p\right)^{p-1} \quad{\rm and}\quad t_p=\left(\frac p{p-1}\right)^{p}.
\end{equation}

If $p=1$, we  choose $c_1>0$ small such that $c_2<1$ and
$$  t_p= \frac1{1-c_2}. $$

Finally, if  $p<1$, for $t>1$, we have
\[
(c_2t k^{p-1}+1)^p\leq (c_2 k^{p-1}+1)^p t^p,
\]
so we may choose
$$  t_p= (c_2 k^{p-1}+1)^\frac{p}{1-p}. $$
Hence, for $t_p$ we have chosen,  by the definition of $w_{t_p}$, we have  $w_{t_p}>v_0$ and
$$v_1= \mathbb{G}[Vv_0^p]+ k \mathbb{G}[\delta_0]<\mathbb{G}[Vw_{t_p}^p]+ k \mathbb{G}[\delta_0]=w_{t_p}.$$
Inductively, we obtain
\begin{equation}\label{2.10a}
v_n\le w_{t_p}
\end{equation}
for all $n\in\N$. Therefore, the sequence $\{v_n\}$ converges. Let $u_{k,V}:=\lim_{n\to\infty} v_n$. By \eqref{2.8}, $u_{k,V}$ is a weak solution of ($P_k$).

We claim that $u_{k,V}$ is the minimal solution of ($P_k$), that is, for any positive solution $u$ of ($P_k$), we always have $u_{k,V}\leq u$. Indeed,  there holds
\[
 u  = \mathbb{G}[V u^p]+ k \mathbb{G}[\delta_0]\ge v_0,
\]
and then
\[
 u  = \mathbb{G}[V u^p]+ k \mathbb{G}[\delta_0]\ge \mathbb{G}[Vv_0^p]+ k \mathbb{G}[\delta_0]=v_{1}.
\]
We may show inductively that
\[
u\ge v_n
\]
for all $n\in\N$.  The claim follows.

 Similarly,  if problem ($P_k$) has a nonnegative solution $u$  for $ k_1>0$, then ($P_k$) admits a minimal solution $u_{k,V}$ for all $ k\in(0, k_1]$. As a result, the mapping $ k\mapsto u_{k,V}$ is increasing.
So we may define
$$k^*=\sup\{k>0:\ (P_k)\ {\rm has\ a\ minimal\ solution\ for\ }k \},$$
which is the largest $k$ such that problem ($P_k$) has minimal positive solution, and $k^*>0$.

\bigskip

We remark that in the cases $0< p < 1$ and $p = 1,\, c_1>0$ small, we may always find a super-solution $w_{t_p}$.  Hence, there exists a minimal solution for all $k>0$, that is , $k^* = \infty$.

Now, we prove that $ k^*<+\infty$ if $p>1$. Suppose on the contrary that, problem ($P_k$) admits a minimal solution $u_{k,V}$ for $k>0$ large.
 We observe that
 $$u_{k,V}\ge  k \mathbb{G}[\delta_0].$$
Let $x_0\in {\rm supp} V$ be a point such that $x_0\not=0$, $V(x_0)>0$ and  for some $r>0$ such that
$$
V(x)\ge \frac{V(x_0)}2,\quad \forall x\in B_r(x_0).
$$
Denote by $\eta_0$ a $C^2$ function such that
$$\eta_0(x)=1,\quad x\in B_1(0)\quad{\rm and}\quad \eta_0(x)=0,\quad\forall x\in \R^N\setminus B_2(0).$$
Let $\eta_0^R(x) = \eta_0(\frac{x-x_0}{R})$ and
\[
\xi_R(x)=\mathbb{G}[\chi_{B_r(x_0)}]\eta_0^R(x)\in C^{1,1}_c(\R^N)
\]
for $R>r$, where $\chi_\Omega$ is the characterization function of $\Omega$. Thus,
$$
\lim_{R\to+\infty}\xi_R=\mathbb{G}[\chi_{B_r(x_0)}].
$$
Taking $\xi_R$ as a test function with $R>4r$, we obtain
\begin{equation}\label{2.11}
\int_{B_r(x_0)} u_{k,V}\, dx +\int_{B_{2R}(x_0)\setminus B_{R}(x_0)} u_{k,V} (-\Delta)\xi_R\, dx =\int_{\R^N} V u_{k,V}^p\xi_R\, dx+k\xi_R(0).
\end{equation}
For $x\in B_{2R}(x_0)\setminus B_{R}(x_0)$, we have
\[
|(-\Delta)\xi_R(x)|\le|\nabla \mathbb{G}[\chi_{B_r(x_0)}] \cdot\nabla \eta_0^R(x)|+|\mathbb{G}[\chi_{B_r(x_0)}](-\Delta)\eta_0^R(x)|.
\]
Since
\[
|\nabla \eta_0^R(x)|\le \frac cR, \,\, |\Delta)\eta_0^R(x)|\le \frac c{R^2},\,\, |\nabla \mathbb{G}[\chi_{B_r(x_0)}]|\le \frac c{R^{1-N}} \,\, {\rm and}\,\, |\mathbb{G}[\chi_{B_r(x_0)}]|\le \frac c{R^{2-N}},
\]
there exists $c_{19}>0$ such that
\[
|(-\Delta)\xi_R(x)|\le c_{19}R^{-N}
\]
for $x\in B_{2R}(x_0)\setminus B_{R}(x_0)$. Since $u_{k,V}$ is a weak solution, we have
\[
\lim_{R\to+\infty}\sup_{x\in\R^N\setminus B_R(0)}u_{k,V}(x)=0,
\]
which yields
$$\lim_{R\to+\infty} \int_{B_{2R}(x_0)\setminus B_{R}(x_0)} u_{k,V} (-\Delta)\xi_R dx=0.$$
Let $R\to+\infty$ in \eqref{2.11}, we see that
\[
\int_{B_r(x_0)} u_{k,V}\, dx  = \int_{\R^N} V u_{k,V}^{p}\mathbb{G}[\chi_{B_r(x_0)}] dx+k\mathbb{G}[\chi_{B_r(x_0)}](0).
\]
By \eqref{2.10a} and the fact that
\[
u_{k,V}\ge k \mathbb{G}[\delta_0]\quad {\rm and }\quad \mathbb{G}[\chi_{B_r(x_0)}]>0,
\]
we obtain
\[
\begin{split}
\int_{B_r(x_0)} u_{k,V}\, dx
&\ge   k^{p-1}\int_{\R^N} V u_{k,V}  \mathbb{G}[\delta_0]^{p-1}\mathbb{G}[\chi_{B_r(x_0)}] dx+k\mathbb{G}[\chi_{B_r(x_0)}](0)\\
&\ge  c_{20} k^{p-1}\int_{B_r(x_0)} u_{k,V} dx +k\mathbb{G}[\chi_{B_r(x_0)}](0)
\end{split}
\]
with $c_{20}>0$,
which is impossible if $k$ is sufficient large. The assertion follows. \smallskip

\smallskip

Next, we prove $(ii)$.
Let $V_1\ge V_2$ and $u_{k,V_1}$ be a positive solution of problem ($P_k$) with $V=V_1$.  Therefore, $u_{k,V_1}$
is a super-solution of  ($P_k$) with $V=V_2$. It implies that problem ($P_k$) with $V=V_2$ has a minimal solution $u_{k,V_2}\le u_{k,V_1}$. This shows that  the mapping  $V\mapsto k^*$  is decreasing and  the mapping $V\mapsto u_{k,V}$ is increasing.

\smallskip
Finally, we show $(iii)$ is valid.   In fact, if $V$ is radially symmetric,  so is $v_n$, which is defined in \eqref{2.6} since $v_0$ is radially symmetric. It follows that the limit $u_{k,V}$ of $v_n$ is radially symmetric too.
\hfill$\Box$\medskip

For  future reference,  we remark that for $p>1$ and $k\in(0,k_p]$ with $k_p:=(c_2p)^{-\frac1{p-1}}\frac{p-1}{p}$, the minimal solution $u_{k,V}$ verifies
\begin{equation}\label{2.111}
u_{k,V}\le w_{t_p}\le c_{21} k\mathbb{G}[\delta_0]\quad {\rm in}\quad\R^N\setminus\{0\}
\end{equation}
for some $c_{21}>0$ depending only on $k_p$. Thus, $Vu_{k,V}$ is locally bounded in $\R^N\setminus\{0\}$, which allows us to show that $u_{k,V}$ is a classical solution of (\ref{eq 1.2}).

\section{ Properties of minimal solutions}
\bigskip

In this section, we establish the regularity and the decaying law for weak solutions, as well as the stability for the minimal solution.

First, we have the regularity results for weak solutions of ($P_k$).
\begin{proposition}\label{pr 3.1}
 Assume that the  function $V$ satisfies (\ref{a}) with $a_\infty>a_0$ and $a_0\in\R$, and
\begin{equation}\label{p1}
 p\in \left(0,\frac{N}{N-2}\right).
\end{equation}
Then, any positive weak solution $u$ of ($P_k$)  is a classical solution of (\ref{eq 1.2}).

\end{proposition}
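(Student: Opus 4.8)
The plan is to prove Proposition~\ref{pr 3.1} by a purely local elliptic bootstrap on $\R^N\setminus\{0\}$, the hypothesis $p<\frac N{N-2}$ being precisely the threshold that makes the iteration gain integrability at each step. Since the decay $\lim_{|x|\to\infty}u(x)=0$ is built into the definition of a weak solution, it is enough to upgrade a positive weak solution $u$ to a classical solution of $-\Delta u=Vu^p$ on $\R^N\setminus\{0\}$; the atom $k\delta_0$ does not act there, so $u$ satisfies $-\Delta u=g$ in $\mathcal D'(\R^N\setminus\{0\})$ with $g:=Vu^p$, and the only facts about the weight that will be used are that $V$ is locally bounded on $\R^N\setminus\{0\}$ (from local Lipschitz continuity) and that $g\in L^1(\R^N)$. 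To start the iteration, fix a bounded open $\omega$ with $\overline\omega\subset\R^N\setminus\{0\}$ and $\zeta\in C_c^\infty(\R^N\setminus\{0\})$ with $\zeta\equiv1$ on $\omega$; then $u-\mathbb{G}[\zeta g]$ is harmonic, hence smooth, on $\omega$, while $\mathbb{G}[\zeta g]\in L^q_{\loc}(\R^N)$ for every $q<\frac N{N-2}$ because $\zeta g\in L^1(\R^N)$ has compact support. Since $p<\frac N{N-2}$ I may fix an exponent $q_0$ with $\max\{1,p\}<q_0<\frac N{N-2}$, and conclude $u\in L^{q_0}_{\loc}(\R^N\setminus\{0\})$.

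Now suppose $u\in L^{q}_{\loc}(\R^N\setminus\{0\})$ for some $q\ge q_0$. On each $\omega$ as above, $|g|=|V|u^p\le C_\omega u^p$, so $g\in L^{q/p}_{\loc}$ with $q/p>1$. If $q/p\ge N/2$, the $W^{2,q/p}$ interior estimates put $u$ in $L^r_{\loc}$ for all $r<\infty$, and one further application gives $u\in C^{0,\gamma}_{\loc}\subset L^\infty_{\loc}$. If $q/p<N/2$, the interior $L^{q/p}$-estimate gives $u\in W^{2,q/p}_{\loc}$, hence $u\in L^{q^+}_{\loc}$ with $q^+=\frac{Nq}{Np-2q}$. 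A direct computation shows $q^+>q$ exactly when $q>\frac{N(p-1)}2$, and the inequality $\frac N{N-2}>\frac{N(p-1)}2$ is equivalent to $p<\frac N{N-2}$; since $q_0>\frac{N(p-1)}2$, and $\frac{N(p-1)}2$ is the only finite fixed point of $q\mapsto q^+$ while $q^+\to\infty$ as $q\uparrow\frac{Np}2$, iterating the estimate a finite number of times yields $u\in L^\infty_{\loc}(\R^N\setminus\{0\})$. Consequently $g=Vu^p\in L^\infty_{\loc}$, so $u\in W^{2,s}_{\loc}$ for every $s<\infty$ and therefore $u\in C^{1,\alpha}_{\loc}(\R^N\setminus\{0\})$ for all $\alpha\in(0,1)$.

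Finally, $u$ is now continuous and nonnegative on $\R^N\setminus\{0\}$, so $u^p\in C^{0,\beta}_{\loc}$ with $\beta=\alpha\min\{1,p\}$, and since $V$ is locally Lipschitz, $g=Vu^p\in C^{0,\beta}_{\loc}(\R^N\setminus\{0\})$; interior Schauder estimates applied to $-\Delta u=g$ then give $u\in C^{2,\beta}_{\loc}(\R^N\setminus\{0\})$. Thus $u$ solves $-\Delta u=Vu^p$ classically on $\R^N\setminus\{0\}$, which together with the prescribed decay at infinity is exactly \eqref{eq 1.2}. The only genuinely delicate point is the bookkeeping in the second step — verifying that $q\mapsto\frac{Nq}{Np-2q}$ is increasing and escapes the interval $(\frac{N(p-1)}2,\frac{Np}2)$ in finitely many steps — and it is here that $p<\frac N{N-2}$ enters in an essential way; the rest is standard interior elliptic regularity together with the local boundedness of the weight away from the origin.
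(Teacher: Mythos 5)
Your argument is correct and is essentially the paper's own proof: both localize away from the origin (where the Dirac mass is invisible), start the bootstrap from $u\in L^{q}_{\mathrm{loc}}$ for $q<\frac{N}{N-2}$ coming from the $L^1$ datum $Vu^p$, and iterate a gain of integrability whose exponent arithmetic works precisely because $p<\frac{N}{N-2}$, finishing with Schauder theory; the only difference is that you run the iteration through the harmonic-plus-Newtonian-potential decomposition and Calder\'on--Zygmund/Sobolev embeddings, whereas the paper invokes the $L^s\to L^r$ mapping properties of the Green operator from its appendix (Propositions \ref{embedding} and \ref{embedding1}) on a sequence of shrinking balls $B_{2^{-i}r_0}(x_0)$ — the same estimate in different clothing. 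One cosmetic caution: when $p<1$ the phrase ``interior $L^{q/p}$-estimate applied to $u$'' needs the decomposition $u=h+\mathbb{G}[\zeta g]$ you set up in the first paragraph (since $q/p>q$ one does not yet know $u\in L^{q/p}_{\mathrm{loc}}$ a priori), but as you have that decomposition in hand this is not a gap.
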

\begin{proof}  Let $u$ be a weak solution of ($P_k$). Since $V u^p\in L^1(\R^N)$, $u$ can be rewritten as
$$u=\mathbb{G}[Vu^p]+k \mathbb{G}[\delta_0].$$

For any $x_0\in \R^N\setminus\{0\}$, let  $r_0=\frac14 |x_0|$.  Then, we have  that  for any $i\in\N$,
$$u=\mathbb{G}[\chi_{B_{2^{-i+1}r_0}(x_0)}Vu^p]+\mathbb{G}[\chi_{\R^N\setminus B_{2^{-i+1}r_0}(x_0)}V u^p]+k \mathbb{G}[\delta_0] $$
and $V\in L^\infty_{loc}(\R^N\setminus\{0\})$.

For $x\in B_{2^{-i}r_0}(x_0)$, we have that
$$\mathbb{G}[\chi_{\R^N\setminus B_{2^{-i+1}r_0}(x_0)}V u^p](x)=\int_{\R^N\setminus B_{2^{-i+1}r_0}(x_0)}\frac{c_N V(y)u^p(y)}{|x-y|^{N-2}}dy,$$
then
\begin{equation}\label{3.01-1}
\norm{\mathbb{G}[\chi_{\R^N\setminus B_{2^{-i}r_0}(x_0)} Vu^p]}_{C^2(B_{2^{-i+1}r_0}(x_0))}\le C_i \norm{Vu^p}_{L^1(B_{2r_0}(x_0))},
\end{equation}
 where $C_i>0$ depends on $i$.
Obviously, there holds for some  $c_i>0$ depending on $i$
\begin{equation}\label{3.01}
\norm{\mathbb{G}[\delta_0]}_{C^2(B_{2^{-i+1}r_0}(x_0))} \le   c_i|x_0|^{2-N}.
\end{equation}

By $(iii)$ of Proposition \ref{embedding}, $Vu^p\in L^{q_0}(B_{2r_0}(x_0))$ with $q_0=\frac12(1+\frac1p\frac{N}{N-2})>1$.
We iterate by Proposition \ref{embedding} that
$$\mathbb{G}[\chi_{B_{2r_0}(x_0)}Vu^p]\in L^{p_1}(B_{2r_0}(x_0))\ {\rm with }\ \ p_1=\frac{Nq_0}{N-2q_0}.$$
Similarly,
$$Vu^p\in L^{q_1}(B_{r_0}(x_0))\ {\rm with }\ \   q_1=\frac{p_1}{p},$$
and
$$\mathbb{G}[\chi_{B_{r_0}(x_0)}Vu^p]\in L^{p_2}(B_{r_0}(x_0))\ {\rm with }\ \ p_2=\frac{Nq_1}{N-2q_1}.$$

Let $q_i=\frac{p_{i}}{p}$.  If $N-2q_i>0$, we obtain inductively that
$$Vu^p\in L^{q_i}(B_{2^{-i}r_0}(x_0)) $$
and
$$\mathbb{G}[\chi_{B_{2^{-i}r_0}(x_0)}Vu^p]\in L^{p_{i+1}}(B_{2^{-i}r_0}(x_0))\ {\rm with }\ \ p_{i+1}=\frac{Nq_i}{N-2q_i}.$$
We may verify that
$$\frac{q_{i+1}}{q_i}=\frac1p\frac N{N-2q_i}>\frac1p\frac{N}{N-2q_1}>1.$$
Therefore,
$$\lim_{i\to+\infty} q_i=+\infty.$$
So there exists $i_0$ such that $N-2q_{i_0}>0$, but $N-2q_{i_0+1}<0$, and we deduce that
$$\mathbb{G}[\chi_{B_{2^{-i_0}r_0}(x_0)}Vu^p]\in L^{\infty}(B_{2^{-i_0}r_0}(x_0)).$$
As a result,
$$u(x_0)\le c_{i_0}\norm{\mathbb{G}[\delta_0]}_{L^\infty(B_{2r_0})(x_0)}+c_{i_0}\norm{Vu^p}_{L^1(B_{2r_0})(x_0)}\to 0\quad {\rm as }\ |x_0|\to+\infty$$
and
$$Vu^p\in L^{\infty}(B_{2^{-i_0}r_0}(x_0)).$$
On the other hand, by Proposition \ref{embedding1},
$$|\nabla \mathbb{G}[\chi_{B_{2^{-i_0}r_0}(x_0)}Vu^p]|\in L^{\infty}(B_{2^{-i_0}r_0}(x_0)).$$
By elliptic regularity, we know from (\ref{3.01}) that $u$ is H\"{o}lder continuous in $B_{2^{-i_0}r_0}(x_0)$, so is $Vu^p$.  Hence, $u$ is a classical solution of  (\ref{eq 1.2}).
\end{proof}

Next, we study the singularity of the weak solution of ($P_k$) at the origin and the decay at infinity.

\begin{lemma}\label{lm 3.1}
Suppose that the function $V$  satisfies (\ref{a}) with $a_0$ and $a_\infty$ given in \eqref{pqa},
 and $p>1$ satisfies \eqref{pq} and (\ref{p1}).
Let $u$ be a weak solution of ($P_k$). Then
\begin{equation}\label{3.0}
 \sup_{x\in \R^N\setminus\{0\}} u(x)|x|^{N-2}< +\infty.
\end{equation}

\end{lemma}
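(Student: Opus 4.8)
\emph{Plan of proof.} Since $u$ is a weak solution and $p\in(0,\frac N{N-2})$, Proposition~\ref{pr 3.1} gives that $u$ is a classical solution of \eqref{eq 1.2} in $\R^N\setminus\{0\}$ and that $u=\mathbb{G}[Vu^p]+k\,\mathbb{G}[\delta_0]$ with $\mathbb{G}[\delta_0](x)=c_N|x|^{2-N}$. Hence \eqref{3.0} is equivalent to the bound $\mathbb{G}[Vu^p](x)\le C|x|^{2-N}$ on $\R^N\setminus\{0\}$; on the annulus $\{\tfrac12\le|x|\le2\}$ this is clear from the local boundedness of $u$, so I only have to control the singularity on $\{|x|\le\tfrac12\}$ and the decay on $\{|x|\ge2\}$. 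The strategy is to settle the singularity at $0$ first, and then to reduce the decay at infinity to the same question by means of the Kelvin transform; in both cases the engine is the computation in the proof of Lemma~\ref{lm 2.1}, but fed with a crude algebraic bound on the solution rather than with $\mathbb{G}[\delta_0]$.

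\emph{Singularity at the origin.} Suppose first a crude bound $u(x)\le C_0|x|^{-\beta}$ holds on $B_{1/2}$ for some $\beta<\frac{N-a_0}{p}$; such $\beta$ exist because $p<\frac{N-a_0}{N-2}$ gives $\frac{N-a_0}{p}>N-2>0$. Then $Vu^p\le c_1C_0^{\,p}|x|^{-a_0-p\beta}$ on $B_{1/2}$ with $a_0+p\beta<N$, and splitting $\mathbb{G}[Vu^p]$ into its parts over $B_{1/2}$ and over $\R^N\setminus B_{1/2}$ exactly as in the proof of Lemma~\ref{lm 2.1} — the latter bounded on $B_{1/4}$, the former $\le C\int_{B_{1/2}}|x-y|^{2-N}|y|^{-a_0-p\beta}\,dy\le C\max\{1,|x|^{2-a_0-p\beta}\}$ since $a_0+p\beta<N$ — and using $2-a_0-p\beta>2-N$, I obtain $\mathbb{G}[Vu^p](x)\le C|x|^{2-N}$ on $B_{1/2}$, hence \eqref{3.0} near the origin. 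The crude bound itself I would get from interior elliptic estimates: the representation yields $\int_{B_{2\rho}(0)}\mathbb{G}[\delta_0]\,dy\le c\rho^2$ and $\int_{B_{2\rho}(0)}G(y,z)\,dy\le c\rho^2$ uniformly in $z$, so $\int_{B_{2\rho}(0)}u\le c\rho^2$ for $\rho\le\tfrac12$; feeding this $L^1$ bound together with $-\Delta u=Vu^p\le c_1|x|^{-a_0}u^p$ into the interior/Harnack estimate on the balls $B_{|x|/2}(x)$ and iterating forces $u(x)\le C_0|x|^{-\beta}$. Here $a_0<2$ together with $p<\frac{N-a_0}{N-2}$ is used to make the homogeneity exponent $\frac{2-a_0}{p-1}$ of the model equation $-\Delta w=|x|^{-a_0}w^p$ strictly larger than $N-2$, which keeps the iteration and the intermediate Harnack constants under control.

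\emph{Decay at infinity.} Knowing $u(x)\le C|x|^{2-N}$ for $|x|\le1$, I pass to the Kelvin transform $\widetilde u(x):=|x|^{2-N}u(x/|x|^2)$, a positive classical solution of $-\Delta\widetilde u=\widetilde V\widetilde u^{\,p}$ in $\R^N\setminus\{0\}$ with $\widetilde V(x)\le c_1|x|^{(N-2)p-N-2+a_\infty}(1+|x|^{a_\infty-a_0})^{-1}$, which near the origin is $\le c_1|x|^{-\widehat a_0}$ with $\widehat a_0:=N+2-a_\infty-(N-2)p$; note that $\widehat a_0<2$ is exactly $p>\frac{N-a_\infty}{N-2}$. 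Moreover $\widetilde u\ge kc_N$ near $0$ (from $u\ge k\mathbb{G}[\delta_0]$) and $\widetilde u$ is bounded for $|x|\ge1$ (by the bound on $u$ near $0$), and the desired estimate $u(x)\le C|x|^{2-N}$ for $|x|\ge2$ is equivalent to $\widetilde u$ being bounded near $0$. I deduce this by running the previous argument for $\widetilde u$, now with $a_0$ replaced by $\widehat a_0<2$: a crude bound $\widetilde u(x)\le C|x|^{-\delta}$ near $0$ with $\delta<\frac{2-\widehat a_0}{p}$ (again from interior estimates, where the constraints \eqref{pq} and \eqref{pqa} on $a_0,a_\infty,p$ enter), followed by one step of the potential estimate above, gives $\widehat a_0+p\delta<2$ and hence $\widetilde u$ bounded near $0$. (When $a_\infty\ge2$ one can alternatively bootstrap the decay of $u$ directly: splitting the Green integral in $u=\mathbb{G}[Vu^p]+k\mathbb{G}[\delta_0]$ over $\{|y|\le\tfrac{|x|}2\}$, $\{\tfrac{|x|}2\le|y|\le2|x|\}$, $\{|y|\ge2|x|\}$ and using $\int_{|y|\ge R}Vu^p\to0$ together with $(N-2)p+a_\infty>N$ improves the decay rate at each step up to $N-2$.)

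\emph{Main obstacle.} The delicate point is producing the two crude a priori bounds — $u(x)\le C_0|x|^{-\beta}$ with $\beta<\frac{N-a_0}{p}$ near $0$, and $\widetilde u(x)\le C|x|^{-\delta}$ with $\delta<\frac{2-\widehat a_0}{p}$ near $0$. The interior/Harnack iteration that yields them does not close by itself, since each step bounds the solution at a point by its supremum over a slightly larger ball; closing it requires a scale-uniform Harnack inequality, available precisely because the homogeneity exponents $\frac{2-a_0}{p-1}$ and $\frac{2-\widehat a_0}{p-1}$ both exceed $N-2$, i.e. because $p\in(\frac{N-a_\infty}{N-2},\frac{N-a_0}{N-2})$, together with $a_0<2$ and the hypotheses \eqref{pqa}. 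Once these crude bounds are in place, only the short computation modeled on Lemma~\ref{lm 2.1} remains.
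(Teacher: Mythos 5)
Your two reductions are both sound in principle, and the route is genuinely different from the paper's: for the singularity at the origin you upgrade a crude bound $u\le C_0|x|^{-\beta}$, $a_0+p\beta<N$, to $\mathbb{G}[Vu^p](x)\le C|x|^{2-N}$ by one potential estimate modeled on Lemma~\ref{lm 2.1} (this step is correct), and for the decay at infinity you use the Kelvin transform to convert the problem into a removable-singularity statement for $-\Delta\widetilde u=\widetilde V\widetilde u^{\,p}$ with $\widehat a_0=N+2-a_\infty-(N-2)p<2$ (the computation of $\widehat a_0$ and its relation to $p>\frac{N-a_\infty}{N-2}$ is also correct, and is arguably cleaner than the paper's treatment of infinity, which runs a case analysis in $a_\infty$ with explicit comparison functions $|x|^{2-N}-|x|^{2-a_\infty}$, $|x|^{\tau_j}$ and a direct splitting of the Green integral).

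The gap is exactly where you flag it, and it is not a removable technicality: neither crude pointwise bound is actually proved. For the superlinear equation $-\Delta u=Vu^p$ with $p>1$, the passage from the $L^1$ information $\int_{B_{2\rho}(0)}u\,dx\le c\rho^2$ to a pointwise bound on $B_{|x|/2}(x)$ is not an application of interior elliptic estimates or of the Harnack inequality: those apply to the linear equation $-\Delta u=(Vu^{p-1})u$ and require $Vu^{p-1}\in L^q$ with $q>\frac N2$ on the balls in question, which presupposes precisely the pointwise control on $u$ you are trying to establish. The ``scale-uniform Harnack inequality, available because the homogeneity exponent exceeds $N-2$'' is an assertion, not an argument; supplying it is essentially the entire content of the lemma, and the same objection applies verbatim to the crude bound for $\widetilde u$ after the Kelvin transform. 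The paper closes this loop without any Harnack input: from $Vu^p\in L^1(\R^N)$ and the representation $u=\mathbb{G}[Vu^p]+k\mathbb{G}[\delta_0]$ one gets $Vu^p\in L^{s_0}$ near the origin with $s_0=\frac12\bigl(1+\frac1p\frac N{N-2}\bigr)>1$, and then an integrability bootstrap through the Green operator estimates of Proposition~\ref{embedding} (setting $u_m=\mathbb{G}[\chi u_{m-1}^p]$ and $s_m=\frac1p\frac{Ns_{m-1}}{Np-2s_{m-1}}$) raises the exponent geometrically until $L^\infty$; the hypothesis $p<\frac N{N-2}$ in \eqref{p1} is exactly what makes the ratio $s_m/s_{m-1}$ stay above $1$. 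If you replace your Harnack step by this bootstrap --- applied to $u$ near $0$, and to $\widetilde u$ near $0$ after checking $\widetilde V\widetilde u^{\,p}\in L^1(B_1)$ and ruling out a Dirac mass for $\widetilde u$ at the origin via $u(x)\to0$ as $|x|\to\infty$ --- your proof closes; as written, it does not.
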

\begin{proof}
It is known from Proposition \ref{pr 3.1} that $u$ is a classical solution of (\ref{eq 1.2}). So we focus on
the problems of the singularity of the weak solution at the origin and the decay at infinity.

We first consider the singularity at the origin. We claim that
\begin{equation}\label{3.2.1}
 \lim_{|x|\to0^+} u(x)|x|^{N-2}=c_Nk.
\end{equation}
Indeed, since $\mathbb{G}[Vu^p\chi_{\R^N\setminus B_1(0)}]\in C^2(B_{\frac 12}(0))$ and $k \mathbb{G}[\delta_{0}](x)= c_Nk|x|^{2-N}$,
we see from
\begin{equation}\label{3.2.1a}
u=\mathbb{G}[Vu^p\chi_{B_1(0)}]+ k \mathbb{G}[\delta_{0}]+ \mathbb{G}[Vu^p\chi_{\R^N\setminus B_1(0)}]
\end{equation}
 that it is sufficient to estimate
 $\mathbb{G}[Vu^p\chi_{B_1(0)}]$ in $B_{\frac 12}(0)$. Let
$$u_1= \mathbb{G}[Vu^p\chi_{B_1(0)} ]. $$
We infer from $Vu^p\in L^{s_0}(B_{\frac 12}(0))$ with $s_0=\frac{1}{2}[1+\frac1p\frac{N}{N-2}]>1$ and Proposition \ref{embedding} that
$u_1\in L^{s_1 p}(B_{\frac12}(0) )$ and  $u_1^p\in L^{s_1}(B_{\frac12}(0))$ with
$$s_1=\frac1p\frac{N}{N-2s_0}s_0.$$
By \eqref{3.2.1a},
\begin{equation}\label{3.2.1b}
u^p\le c_{22}(u_1^p+ k^p\mathbb{G}^p[\delta_{x_0}]+ 1)\quad{\rm in}\ \ B_{1}(0),
\end{equation}
where $c_{22}>0$. By the definition of $u_1$ and \eqref{3.2.1b}, we obtain
\begin{equation}\label{3.2.1c}
 u_1 \le c_{22}(\mathbb{G}[u_1^p]+ k^p\mathbb{G}[V\mathbb{G}^p[\delta_{0}]]+  \mathbb{G}[ \chi_{  B_{1}(0)}]),
\end{equation}
where
\[
 \mathbb{G}[ \chi_{  B_{1}(0)}]]\in L^\infty(B_{\frac12}(0)),\quad   k^p\mathbb{G}[V\mathbb{G}^p[\delta_{0}]](x)\le c_{23}|x|^{(2-N)p-a_0+2}
 \]
and
$$(2-N)p-a_0+2>2-N.$$
If $s_1>\frac12 Np$, by Proposition \ref{embedding},
$u_1\in L^{\infty}(B_{2^{-1}}(0) )$. Hence, we know from \eqref{3.2.1c} that
\begin{equation}\label{3.2.1d}
u_1(x)\leq c_{24}|x|^{(2-N)p-a_0+2}
\end{equation}
in $B_{2^{-1}}(0)$. Since $(2-N)p-a_0+2>2-N$, we deduce from \eqref{3.2.1a} and \eqref{3.2.1d} that \eqref{3.2.1} holds.

On the other hand, if $s_1<\frac12 Np$, we proceed as above.
Let
$$u_2= \mathbb{G}[\chi_{B_{2^{-1}}(0)} u_1^p]. $$
By Proposition \ref{embedding}, $u_2\in L^{s_2p}(B_{2^{-1}}(0))$, where
$$s_2=\frac1p\frac{Ns_1}{Np-2s_1 }>\frac{N}{N-s_0}s_1>\left(\frac1p\frac{N}{N-2s_0}\right)^2s_0.$$
Inductively, we define
$$s_m=\frac1p\frac{Ns_{m-1}}{Np-2s_{m-1}} >\left(\frac1p\frac{N}{N-2s_0}\right)^m s_0.$$
So there is $m_0\in\N$ such that
$$s_{m_0}>\frac{1}{2}Np$$
and
$$u_{m_0}\in L^\infty(B_{2^{-{m_0}-1}}(0)).$$
Therefore, (\ref{3.2.1}) holds.

Next, we establish the decay at infinity, that is,
\begin{equation}\label{3.2.2}
\limsup_{|x|\to+\infty}  u(x)|x|^{N-2} <+\infty.
\end{equation}

It is known from  Proposition \ref{pr 3.1} that $u\in L^\infty_{loc}(\R^N\setminus\{0\})$ and
\begin{equation}\label{3.2.2a}
\lim_{|x|\to+\infty} u(x)=0.
\end{equation}

We divide the proof into three parts in accordance to $a_\infty$: $(a)$ $a_\infty>N$; $(b)$ $a_\infty\in(2,N]$; $(c)$ $a_\infty\in(0,2]$.

Case $(a)$ $a_\infty>N$.  Let
$$\psi_0(x)=|x|^{2-N}-|x|^{2-a_\infty}\quad{\rm for}\ \  |x|\ge 2.$$
There exists $c_{25}>0$ such that
$$-\Delta \psi_0(x)\ge c_{25}|x|^{-a_\infty}.$$
By \eqref{3.2.2a} and the assumption on $V$,  there exist constants  $A, B\ge 1$ such that
$$
V(x)u^p(x)\le A |x|^{-a_\infty}\quad{\rm if}\quad |x|\ge 2\quad{\rm and}\quad u(x)\le B(2^{2-N}-2^{2-a_\infty})\quad{\rm if}\quad |x|=2.
$$
By the comparison principle,
$$
u(x)\le AB\psi_0\le  AB |x|^{2-N}\quad{\rm if}\ \ |x|\ge 2.
$$

Case $(b)$ $a_\infty\in(2,N]$. Let
$$\tau_1= \left\{
\arraycolsep=1pt
\begin{array}{lll}
 2-a_\infty \quad
 &{\rm if}\quad a_\infty\in(2,N),\\[2mm]
 \phantom{}
\frac1p( 2-N) &{\rm if}\quad a_\infty=N,
\end{array}\right.
 $$
and denote
$$\psi_1(x)=|x|^{\tau_1}.$$
Hence,  there exists $c_{26}>0$ such that
$$-\Delta \psi_1(x)\ge c_{26}|x|^{-a_\infty},\quad x\not=0.$$
We may find constants  $A, B\ge1$ such that
$$
V(x)u^p(x)\le A |x|^{\tau_1-2}\quad {\rm if}\quad|x|\ge 1\quad
{\rm and}\quad u (x)\le B\quad{\rm if}\quad|x|\ge 1.$$
By the comparison principle again,
$$
 u(x)\le AB \psi_1(x)\quad{\rm for}\ \ |x|\ge 1.
$$
Now, we  denote
$$\tau_2=2-a_\infty+p\tau_1.$$
If $\tau_2\in[-N,-2)$, let
$$\psi_2(x)= |x|^{\tau_2}.$$
Repeating the above argument, we obtain
\begin{equation}\label{3.2.4}
 u(x)\le c_{26} \psi_2(x)\quad{\rm if}\ \ |x|\ge 1.
\end{equation}
Inductively, we define
\[
\tau_j=2-a_\infty+p\tau_{j-1}.
\]
There exists $j_0\in\N$ such
that $\tau_{j_0-1}>-N$ and $\tau_{j_0}<-N$.
If $\tau_{j_0-1}>-N$, we proceed as above. If $\tau_{j_0}<-N$, set
$$\psi_{\tau_{j_0}}(x)= |x|^{2-N} - |x|^{2+\tau_{j_0}}.$$
We reduce the  problem to the case $(a)$ $a_\infty>N$. Then, (\ref{3.2.2}) holds.

Finally, we consider the case $(c)$ $a_\infty\in(0,2]$.

For $|x|>2$ fixed, let $r_0=\frac12|x|^{\frac{a_\infty}{N}}$, where $\frac{a_\infty}{N}\in(0,1)$. Therefore,
\begin{eqnarray*}
\mathbb{G}[Vu^p] (x)&=& \int_{ B_{r_0}(x)}\frac{c_N}{|x-y|^{N-2}}V(y) u^p(y)dy+\int_{\R^N\setminus B_{r_0}(x)}\frac{c_N}{|x-y|^{N-2}}V(y) u^p(y)dy \\
    &\le& c_{27} (|x|-r_0)^{-a_\infty} \norm{u}^p_{L^\infty(B_{r}(x))} r_0^2+r_0^{2-N} \norm{Vu^p}_{L^1(\R^N)}
    \\&\le &c_{28} |x|^{-(1+\frac{2}{N})a_\infty}
\end{eqnarray*}
and
$$ u=\mathbb{G}[Vu^p]+ k \mathbb{G}[\delta_{0}]\quad{\rm and}\quad \mathbb{G}[\delta_0](x)=c_N|x|^{2-N}$$
imply that
$$u\le c_{29}|x|^{2-N}\quad {\rm if}\quad \gamma_0:=(1+\frac{2}{N})a_\infty\ge N-2,$$
in this case we are done; and
$$u(x)\le c_{30}|x|^{-\gamma_0}\quad {\rm if}\quad \gamma_0<N-2.$$

In the case $a_\infty+ p\gamma_0 \le 2$, let  $r_1=\frac12|x|^{\frac{a_\infty+p\gamma_0}{N}}$, where $\frac{a_\infty+p\gamma_0}{N} \in(0,1)$. We have
\begin{eqnarray*}
\mathbb{G}[Vu^p] (x)&=& \int_{ B_{r_1}(x)}\frac{c_N}{|x-y|^{N-2}}V(y) u^p(y)dy+\int_{\R^N\setminus B_{r_1}(x)}\frac{c_N}{|x-y|^{N-2}}V(y) u^p(y)dy \\
    &\le& c_{31} (|x|-r_1)^{-a_\infty-p\gamma_0 }   r_1^2+c_Nr^{2-N} \norm{Vu^p}_{L^1(\R^N)}
    \\&\le &c_{32} |x|^{-(1+\frac{2}{N})(a_\infty+p\gamma_0)},
\end{eqnarray*}
which implies that
$$u(x)\le c_{33}|x|^{-\gamma_1},$$
where $\gamma_1=(1+\frac{2}{N})(a_\infty+p\gamma_0)$.

Inductively, we define $r_j=|x|^{\gamma_j}$ with $\gamma_j=(1+\frac{2}{N})(a_\infty+p\gamma_j)$. There exists $j_0\in\N$ such
that $a_\infty+ p\gamma_{j_0-1}\le2$ and $a_\infty+ p\gamma_{j_0}>2$. In the former case, we iterate as above; in the latter case, we have
$$V(x)u^p(x)\le c_{34}|x|^{a_\infty+ p\gamma_{j_0}}.$$
By the proof of $(b)$, (\ref{3.2.2}) holds. The proof is complete.
 \end{proof}

Now, we deal with the stability of the minimal solution of ($P_k$).
\begin{proposition}\label{pr 3.2}
Assume that the function $V$ satisfies (\ref{a}) with $a_0<\min\{2,a_\infty\}$,
$p>1$ satisfies (\ref{pq}) and $k\in (0,k^*)$. Then, any minimal positive solution $u_{k,V}$ of ($P_k$) is stable.  Moreover,
\begin{equation}\label{3.1}
\int_{\R^N}  |\nabla \xi|^2 dx- p\int_{\R^N} V u_{k,V}^{p-1}\xi^2 dx\ge c_3[(k^*)^{\frac{p-1}p}-k^{\frac{p-1}p}]\int_{\R^N}  |\nabla \xi|^2 dx,\quad  \forall \xi\in \mathcal{D}^{1,2}(\R^N)\setminus\{0\}.
\end{equation}
\end{proposition}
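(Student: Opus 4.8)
The plan is to obtain \eqref{3.1} from two ingredients: a sub-homogeneity estimate for the minimal solution in the parameter $k$, and the classical semi-stability of minimal solutions. For the first, fix $k_1\in(k,k^*)$, so that $u_{k_1,V}$ exists and $u_{k_1,V}=\mathbb{G}[Vu_{k_1,V}^p]+k_1\mathbb{G}[\delta_0]$. Since $\big(\tfrac{k}{k_1}\big)^p\le\tfrac{k}{k_1}$ for $p>1$, the function $\bar u:=\tfrac{k}{k_1}u_{k_1,V}$ satisfies
\[
\bar u=\mathbb{G}\Big[\tfrac{k}{k_1}Vu_{k_1,V}^p\Big]+k\mathbb{G}[\delta_0]\ \ge\ \mathbb{G}[V\bar u^p]+k\mathbb{G}[\delta_0],
\]
so $\bar u$ is a super-solution of ($P_k$) with $\bar u\ge v_0=k\mathbb{G}[\delta_0]$. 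By the same induction as in the proof of Theorem~\ref{teo 0} (with $\bar u$ in place of $w_{t_p}$) we get $v_n\le\bar u$ for every $n$, hence $u_{k,V}\le\tfrac{k}{k_1}u_{k_1,V}$ and therefore $u_{k,V}^{p-1}\le\big(\tfrac{k}{k_1}\big)^{p-1}u_{k_1,V}^{p-1}$ in $\R^N\setminus\{0\}$.

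Next I would prove that every minimal solution $u_{k_1,V}$ with $k_1<k^*$ is semi-stable, i.e. $p\int_{\R^N}Vu_{k_1,V}^{p-1}\xi^2\,dx\le\int_{\R^N}|\nabla\xi|^2\,dx$ for all $\xi\in\mathcal{D}^{1,2}(\R^N)$. Pick $k_2\in(k_1,k^*)$ and set $w:=u_{k_2,V}-u_{k_1,V}$. By monotonicity in $k$ we have $w\ge0$, and since $-\Delta w=V(u_{k_2,V}^p-u_{k_1,V}^p)+(k_2-k_1)\delta_0\ge(k_2-k_1)\delta_0$, the strong maximum principle gives $w>0$ in $\R^N\setminus\{0\}$; by Proposition~\ref{pr 3.1} both $u_{k_i,V}$ are classical in $\R^N\setminus\{0\}$, and by the asymptotics $u_{k_i,V}(x)|x|^{N-2}\to c_Nk_i$ from Lemma~\ref{lm 3.1} we get $w(x)|x|^{N-2}\to c_N(k_2-k_1)>0$, so $w$ carries exactly the singularity of the fundamental solution at the origin. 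Convexity of $t\mapsto t^p$ yields $-\Delta w\ge pVu_{k_1,V}^{p-1}w$ in $\R^N\setminus\{0\}$. For $\xi\in C^\infty_c(\R^N)$ I would test this inequality against $\phi:=\xi^2/w$, which is bounded, compactly supported, of class $H^1$ and vanishes like $|x|^{N-2}$ at the origin, justifying the integration by parts by excising $B_\varepsilon(0)$ and letting $\varepsilon\to0$ (the boundary terms are $O(\varepsilon^{N-2})$); together with the pointwise identity $|\nabla\xi|^2=w^2|\nabla(\xi/w)|^2+\nabla w\cdot\nabla(\xi^2/w)$ this gives
\[
\int_{\R^N}|\nabla\xi|^2\,dx\ \ge\ \int_{\R^N}\nabla w\cdot\nabla\Big(\frac{\xi^2}{w}\Big)\,dx\ \ge\ p\int_{\R^N}Vu_{k_1,V}^{p-1}\xi^2\,dx,
\]
and the inequality extends to all $\xi\in\mathcal{D}^{1,2}(\R^N)$ by density and Fatou's lemma (equivalently, using the embedding \eqref{1.7}).

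Combining the two steps, for every $k_1\in(k,k^*)$ and every $\xi\in\mathcal{D}^{1,2}(\R^N)\setminus\{0\}$,
\[
\int_{\R^N}|\nabla\xi|^2\,dx-p\int_{\R^N}Vu_{k,V}^{p-1}\xi^2\,dx\ \ge\ \Big(1-\big(\tfrac{k}{k_1}\big)^{p-1}\Big)\int_{\R^N}|\nabla\xi|^2\,dx,
\]
and letting $k_1\uparrow k^*$ (the right-hand side is increasing in $k_1$) replaces $k_1$ by $k^*$. Since $p>1$ forces $k^*<\infty$ by Theorem~\ref{teo 0}, writing $\beta=\tfrac{p-1}{p}$ and using the elementary inequality $b^p-a^p\ge b^{p-1}(b-a)$ for $0\le a\le b$ with $a=k^\beta$, $b=(k^*)^\beta$ gives $1-(k/k^*)^{p-1}=\big((k^*)^{p-1}-k^{p-1}\big)/(k^*)^{p-1}\ge(k^*)^{-\beta}\big[(k^*)^\beta-k^\beta\big]$, which is \eqref{3.1} with $c_3=(k^*)^{-(p-1)/p}$; stability then follows since the right-hand side of \eqref{3.1} is strictly positive for $k<k^*$. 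I expect the main difficulty to be the rigorous justification of the test-function step: because $w$ carries the $|x|^{2-N}$ singularity one has $\nabla w\notin L^2_{\loc}$ near $0$, so one must exploit that $\phi=\xi^2/w$ vanishes to order $|x|^{N-2}$ there in order to make $\nabla w\cdot\nabla\phi$ integrable and the cut-off boundary terms negligible — precisely where the behaviour at the origin from Lemma~\ref{lm 3.1} and the integrability built into \eqref{1.7} enter.
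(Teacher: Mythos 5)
Your proposal is correct, but the central step --- semi-stability of the minimal solution for every parameter below $k^*$ --- is proved by a genuinely different route from the paper's. The paper argues by contradiction: if the quadratic form were not positive, the infimum $\lambda_1$ of the Rayleigh quotient would be $\le 1$ and, via the compact embedding of Corollary \ref{cor 4.1}, would be attained by a nonnegative eigenfunction $\xi_1$ of $-\Delta\xi_1=\lambda_1 pVu_{k,V}^{p-1}\xi_1$; pairing $\xi_1$ with the positive supersolution $w=u_{\hat k,V}-u_{k,V}$ of the linearized equation then forces $\lambda_1>1$, a contradiction. You use the same supersolution $w$ but run the classical Allegretto--Piepenbrink substitution instead, testing $-\Delta w\ge pVu_{k_1,V}^{p-1}w$ against $\xi^2/w$ and invoking the identity $|\nabla\xi|^2=w^2|\nabla(\xi/w)|^2+\nabla w\cdot\nabla(\xi^2/w)$. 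What your route buys is that it never needs the infimum to be attained, hence avoids the compactness statement (which the paper only establishes for $k\le k_p$); what it costs is the justification of the integration by parts across the $|x|^{2-N}$ singularity of $w$, which your two-sided control $c|x|^{2-N}\le w$ and $|\nabla w|\le C|x|^{1-N}$ near the origin does handle, since then $\xi^2/w=O(|x|^{N-2})$ and the excision boundary terms are $O(\varepsilon^{N-2})$ (note only the lower bound on $w$ and the gradient bound are really needed, so the appeal to Lemma \ref{lm 3.1} can be lightened). The quantitative inequality \eqref{3.1} is obtained essentially as in the paper: both arguments scale a minimal solution at a larger parameter into a supersolution of ($P_k$) --- you take the factor $k/k_1$ where the paper takes $(k/k')^{1/p}$, and your comparison $u_{k,V}\le (k/k_1)u_{k_1,V}$ is in fact the sharper one --- and both resulting constants are bounded below by $c_3[(k^*)^{\frac{p-1}p}-k^{\frac{p-1}p}]$ by elementary inequalities. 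The paper's separate Hardy-inequality proof of stability for small $k$ becomes unnecessary in your scheme.
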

\begin{proof} We divide the proof into two parts. At first, we show the stability of the minimal solution for $k>0$ small; then, we prove this is true for full range $k\in (0,k^*)$.

Now we show for $k>0$ small, the result holds true. By (\ref{2.111}), for $k>0$ small,
\[
u_{k,V}(x)\le c_{35}k|x|^{2-N}\quad {\rm in}\quad \R^N\setminus\{0\},
\]
where $c_{35}>0$ is independent of $k$. Therefore,
\begin{equation}\label{3.2}
V(x)u_{k,V}^{p-1}(x)\le c_{35}^{p-1}c_1k^{p-1}\frac{|x|^{(2-N)(p-1)-a_0}}{1+|x|^{a_\infty-a_0}}.
\end{equation}
we note for
$$p\in(\frac{N-a_\infty}{N-2},\frac{N-a_0}{N-2}),$$
there holds
$$  (2-N)(p-1)-a_0  \ge-2\quad{\rm and}\quad  (2-N)(p-1)-a_\infty   <- N.
$$
It implies from \eqref{3.2} that
\begin{equation}\label{3.3}
V(x)u_{k,V}^{p-1}(x)\le c_{35}^{p-1}c_1 \frac{k^{p-1}}{|x|^2}.
\end{equation}
Hence, for any $\xi\in C^{1,1}_c(\R^N)$, by \eqref{3.3} and the Hardy-Sobolev inequality, we deduce for $k>0$ small that
\begin{equation}\label{3.4}
 \int_{\R^N}V u_{k,V}^{p-1}\xi^2\, dx\le c_{35}^{p-1}c_1 k^{p-1}\int_{\R^N} \frac{\xi^2(x)}{|x|^2}\, dx \le \frac1p\int_{\R^N} |\nabla\xi|^2\,  dx.
\end{equation}
Inequality \eqref{3.4} holds also for $\xi\in  \mathcal{D}^{1,2}(\R^N)$, which means that
$u_{k,V} $ is a semi-stable solution of ($P_k$) for $k>0$ small. Indeed, by a density argument, taking
$\{\xi_n\}_n$ in $C^{1,1}_c(\R^N)$ so that $\xi_n\to \xi$ in $\mathcal{D}^{1,2}(\R^N)$, and replacing $\xi$ in \eqref{3.4} by $\xi_n$, the result follows by passing the limit.

\smallskip

Next, we prove the stability of minimal solutions for all $k\in(0,k^*)$. Suppose that if $u_k$ is not stable, then we have that
\begin{equation}\label{3.5}
\lambda_1:= \inf_{\xi\in \mathcal{D}^{1,2}(\R^N)\setminus\{0\}}\frac{\int_{\R^N} |\nabla\xi|^2\, dx}{p\int_{\R^N}  V u_{k,V}^{p-1} \xi^2\, dx}\le 1.
\end{equation}
 By the compact embedding theorem in section 4, $\lambda_1$ is achieved by a nonnegative function $\xi_1$ satisfying
$$-\Delta \xi_1=\lambda_1 pVu_{k,V}^{p-1} \xi_1.$$
Choosing $\hat{k}\in (k,k^*)$ and letting $w=u_{\hat k,V} -u_{k,V}>0$, we have that
$$w=\mathbb{G}[Vu_{\hat k,V}^p-Vu_{k,V}^p]+(\hat{k}-k)\mathbb{G}[\delta_0].$$
By the elementary inequality
$$(a+b)^p\ge a^p+pa^{p-1}b \quad{\rm for}\ \ a, b\ge 0,$$
we infer that
$$w\ge \mathbb{G}[V u_{k,V}^{p-1} w]+(\hat{k}-k)\mathbb{G}[\delta_0].$$
Choosing $t>0$ small, we obtain
\begin{eqnarray*}
\lambda_1\int_{\R^N}pVu_{k,V}^{p-1} w\xi_1\,dx    &= &   \int_{\R^N}(-\Delta) w \xi_1\,dx
 \\ &\ge &  \int_{\R^N}p Vu_{k,V}^{p-1} w\xi_1\,dx+(\hat{k}-k)\xi_1(0)
>  \int_{\R^N}p Vu_{k,V}^{p-1} w\xi_1\,dx,
\end{eqnarray*}
which is impossible. Consequently,
$$
 p\int_{\R^N}V u_{k,V}^{p-1}  \xi^2 dx< \int_{\R^N} |\nabla\xi|^2  dx,\quad \forall\, \xi \in  \mathcal{D}^{1,2}(\R^N)\setminus\{0\}.
$$

Finally, we prove (\ref{3.1}). For any $k\in (0,k^*),$ let $k'=\frac{k+k^*}{2}>k$ and $l_0=(\frac k{k'})^{\frac1{p}}<1$. Hence, there exists a minimal solution $u_{k',V}$ of ($P_k$) for $k'<k^*$, and the minimal solution $u_{k',V}$ is stable. Noting that $k-k'l_0^p=0$, we deduce
\begin{eqnarray*}
 l_0u_{k',V}&\ge&l_0^p u_{k',V}
 \\&=& l_0^p\left(\mathbb{G}[Vu_{k',V}^p]+k'\mathbb{G}[\delta_0]\right)+(k-k'l_0^p)\mathbb{G}[\delta_0]
 \\&=& \mathbb{G}[V(l_0u_{k',V})^p]+k\mathbb{G}[\delta_0],
\end{eqnarray*}
that is, $l_0u_{k',V}$ is a super-solution of ($P_k$) for $k$. Therefore,
$$l_0u_{k',V}\ge u_{k,V}.$$
So for $\xi\in \mathcal{D}^{1,2}(\R^N)\setminus\{0\}$,
\begin{eqnarray*}
0&<& \int_{\R^N}  |\nabla \xi|^2 dx- p\int_{\R^N} V u_{k',V}^{p-1}\xi^2\, dx
\\ &\le & \int_{\R^N}  |\nabla \xi|^2\,dx- pl_0^{1-p}\int_{\R^N} V u_{k,V}^{p-1}\xi^2\, dx
\\&=&l_0^{1-p}\left[l_0^{p-1}\int_{\R^N}  |\nabla \xi|^2\, dx- p\int_{\R^N} V u_{k,V}^{p-1}\xi^2 dx\right].
\end{eqnarray*}
It implies
\begin{eqnarray*}
&&\int_{\R^N}  |\nabla \xi|^2\, dx- p\int_{\R^N} V u_{k,V}^{p-1}\xi^2\, dx\\
 &=& (1-l_0^{p-1})\int_{\R^N}  |\nabla \xi|^2 dx+  \left[l_0^{p-1}\int_{\R^N}  |\nabla \xi|^2 \,dx- p\int_{\R^N} V u_{k,V}^{p-1}\xi^2\, dx\right]\\
    &\ge & (1-l_0^{p-1}) \int_{\R^N}  |\nabla \xi|^2\, dx,
\end{eqnarray*}
which together with the fact
 $$ 1-l_0^{p-1}\ge c_{36}[(k^*)^{\frac{p-1}p}-k^{\frac{p-1}p}], $$
 implies (\ref{3.1}).
The proof is complete.
\end{proof}

\begin{corollary}\label{cr 3.1}
Assume that $p>1$, the function $V$ satisfies (\ref{a}) with $a_0<\min\{2,a_\infty\}$.
 Then, for $k\in(0,k_p)$, the minimal solution $u_{k,V}$ of ($P_k$) is stable and satisfies  (\ref{eq 1.2}) as well as (\ref{3.1}).

\end{corollary}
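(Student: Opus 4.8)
The guiding observation is that for $k\in(0,k_p)$ the minimal solution obeys the explicit a priori bound \eqref{2.111}, and this bound plays on the whole interval $(0,k_p)$ the role that the compactness condition \eqref{pq} plays in Proposition \ref{pr 3.2}; so the plan is to re-run the arguments of Section~2 and of Proposition \ref{pr 3.2} with \eqref{2.111} substituted for \eqref{pq}. Throughout I use that $k^*\ge k_p$, so $(0,k_p)\subset(0,k^*)$ and $u_{k,V}$ exists for every such $k$ by Theorem \ref{teo 0}.

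First I would establish \eqref{eq 1.2}. By \eqref{2.111}, $u_{k,V}\le c_{21}k\,\mathbb{G}[\delta_0]=c_{21}c_N k\,|x|^{2-N}$ with $c_{21}$ depending only on $k_p$, so $Vu_{k,V}^{p}$ is bounded by a constant multiple of $V_0(x)|x|^{(2-N)p}$, hence is locally bounded in $\R^N\setminus\{0\}$; it is also in $L^1(\R^N)$ since $u_{k,V}$ is a weak solution. Inserting this into $u_{k,V}=\mathbb{G}[Vu_{k,V}^{p}]+k\mathbb{G}[\delta_0]$ and running the usual elliptic bootstrap/regularity argument gives $u_{k,V}\in C^{2}(\R^N\setminus\{0\})$, which together with $\lim_{|x|\to\infty}u_{k,V}=0$ is \eqref{eq 1.2}. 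This step bypasses Proposition \ref{pr 3.1}, since \eqref{p1} is not assumed here.

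For stability I would mimic the proof of Proposition \ref{pr 3.2}. The bound above together with condition \eqref{p} gives, exactly as there, $Vu_{k,V}^{p-1}\le C c_1 k^{p-1}|x|^{-2}$; the Hardy--Sobolev inequality then yields stability for $k$ small. To reach all $k\in(0,k_p)$ I would use the super-solution comparison of that proof: for $0<k<k'<k^*$, the function $l_0u_{k',V}$ with $l_0=(k/k')^{1/p}<1$ is a super-solution of $(P_k)$, so $u_{k,V}\le l_0u_{k',V}$, hence $u_{k,V}^{p-1}\le l_0^{1-p}u_{k',V}^{p-1}$; provided $u_{k',V}$ is (semi-)stable this gives $\int|\nabla\xi|^{2}-p\int Vu_{k,V}^{p-1}\xi^{2}\ge(1-l_0^{p-1})\int|\nabla\xi|^{2}$, which propagates stability downward in $k$ and, choosing $k'=(k+k^*)/2$ as in Proposition \ref{pr 3.2}(iii), delivers \eqref{3.1} with $c_3=(k^*)^{-(p-1)/p}$.

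The step I expect to be the real obstacle is the one place where Proposition \ref{pr 3.2} used the full range \eqref{pq}: the compactness of the embedding \eqref{1.7}, used there to produce a first eigenfunction. That tool is unavailable under the hypotheses of the corollary, so I would replace it by a Picone-type argument based on the positive super-solution $w=u_{\hat k,V}-u_{k,V}>0$ (for $k<\hat k<k_p$). Since $(a+b)^{p}\ge a^{p}+pa^{p-1}b$, the function $w$ satisfies $-\Delta w\ge pVu_{k,V}^{p-1}w$ classically in $\R^N\setminus\{0\}$ and $-\Delta w\ge pVu_{k,V}^{p-1}w+(\hat k-k)\delta_0$ distributionally. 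If the quadratic form $\xi\mapsto\int|\nabla\xi|^{2}-p\int Vu_{k,V}^{p-1}\xi^{2}$ vanished on some $\xi_0\in\mathcal{D}^{1,2}(\R^N)\setminus\{0\}$, $\xi_0\ge0$, then $-\Delta\xi_0=pVu_{k,V}^{p-1}\xi_0$, and pairing this equation with $w$ (the integration by parts being legitimate because $C_c^{\infty}(\R^N\setminus\{0\})$ is dense in $\mathcal{D}^{1,2}(\R^N)$, a point having zero $2$-capacity for $N>2$) would force $0\ge(\hat k-k)\xi_0(0)$, hence $\xi_0(0)=0$; but a nonnegative superharmonic function vanishing at an interior point vanishes identically, by the strong minimum principle and unique continuation, contradicting $\xi_0\ne0$. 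The delicate point is that $w$ blows up like $\mathbb{G}[\delta_0]$ at the origin, so $w\notin\mathcal{D}^{1,2}(\R^N)$ and the pairing must be carried out on $\R^N\setminus\{0\}$; it is precisely \eqref{2.111}, valid only for $k<k_p$, that keeps $u_{k,V}$ and hence $w$ classical away from the origin and makes the whole argument go through.
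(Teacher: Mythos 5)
Your treatment of the regularity claim and of the quantitative estimate \eqref{3.1} matches the paper's: the bound \eqref{2.111} gives $Vu_{k,V}^{p}\in L^\infty_{loc}(\R^N\setminus\{0\})$ and the bootstrap of Propositions \ref{embedding}--\ref{embedding1} yields \eqref{eq 1.2}; the Hardy--Sobolev step for small $k$ and the super-solution scaling $u_{k,V}\le l_0u_{k',V}$ with $l_0=(k/k')^{1/p}$ are exactly those in the proof of Proposition \ref{pr 3.2} (note the exponent should read $u_{k,V}^{p-1}\le l_0^{p-1}u_{k',V}^{p-1}$, not $l_0^{1-p}$, for the conclusion $(1-l_0^{p-1})\int|\nabla\xi|^2$ to come out). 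Where you deviate, there are two problems. First, the premise for deviating is mistaken: the compactness used in Proposition \ref{pr 3.2} to produce the first eigenfunction is the embedding \eqref{1.7} into $L^2(\R^N,V_0u_{k,V}^{p-1}dx)$, i.e.\ Corollary \ref{cor 4.1}, whose proof needs only the pointwise bound \eqref{2.111} (hence $k\le k_p$) together with \eqref{p}; it is the embedding \eqref{1.8} into $L^{p+1}(\R^N,V_0dx)$ that requires \eqref{pq}, and that one plays no role in the stability argument. So the paper's eigenfunction argument is available verbatim for $k\in(0,k_p)$, which is exactly what its one-line proof of the corollary intends.

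Second, and more seriously, your replacement argument has a logical gap. You write: if the quadratic form $Q(\xi)=\int|\nabla\xi|^2-p\int Vu_{k,V}^{p-1}\xi^2$ vanished at some $\xi_0\ne0$, then $-\Delta\xi_0=pVu_{k,V}^{p-1}\xi_0$. That Euler--Lagrange equation follows only if $\xi_0$ minimizes $Q$, i.e.\ only if you already know $Q\ge0$ on all of $\mathcal{D}^{1,2}(\R^N)$. Your Hardy--Sobolev step gives $Q\ge0$ only for $k$ small, and your downward propagation requires semi-stability at the larger parameter $k'$ as an input; so for intermediate $k\in(0,k_p)$ nothing excludes $Q(\xi_0)<0$, and your Picone step never engages. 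The underlying idea can be repaired: the positive super-solution $w=u_{\hat k,V}-u_{k,V}$ of $-\Delta w\ge pVu_{k,V}^{p-1}w$ in $\R^N\setminus\{0\}$ yields, via the Picone identity
$\int|\nabla\xi|^2-\int\frac{-\Delta w}{w}\,\xi^2=\int w^2|\nabla(\xi/w)|^2\ge0$ for $\xi\in C_c^\infty(\R^N\setminus\{0\})$ and the fact that the origin has zero capacity for $N>2$, semi-stability of $u_{k,V}$ for every $k<\hat k<k^*$ directly, with no compactness and no contradiction argument; the scaling step then upgrades this to strict stability and \eqref{3.1}. As written, however, your proof does not close.
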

\begin{proof} Since for $k\le k_p$,  the minimal solution  $u_{k,V}$ of ($P_k$) is controlled by $w_{t_p}$, this yields $Vu_{k,V}\in L^\infty_{loc}(\R^N\setminus\{0\})$. It follows  by  Proposition \ref{embedding} and Proposition \ref{embedding1} that $u_{k,V}$ is a classical solution of
(\ref{eq 1.2}). The rest results follow by the proof of Proposition \ref{pr 3.2} and (\ref{2.111}).
\end{proof}
\bigskip

\noindent{\it Proof of Theorem \ref{teo 0a}.}  The assertions in the theorem follow by Proposition \ref{pr 3.1},  Proposition \ref{pr 3.2} and Corollary \ref{cr 3.1}.\hfill$\Box$

\section{Mountain-Pass solution}

\bigskip

In order to find the second solution of ($P_k$), we try to find a nontrivial function $u$ so that $u_{k,V}+u$ is a solution of ($P_k$), which is different from the minimal solution  $u_{k,V}$ of ($P_k$). We are then led to consider the problem
\begin{equation}\label{eq 4.1}
  \arraycolsep=1pt
\begin{array}{lll}
 \displaystyle   -\Delta   u=V (u_{k,V}+u_+)^{p}-V u_{k,V}^p \quad
 &{\rm in}\quad \R^N,\\[2mm]
 \phantom{ }
 \displaystyle  \lim_{|x|\to+\infty}u(x)=0.
\end{array}
\end{equation}
It is adequate to find a nontrivial solution of \eqref{eq 4.1}. Intuitively, the cancelation of the singularity of $u_{k,V}$ in the nonlinear term on the right hand side of \eqref{eq 4.1} allows us to find a solution of \eqref{eq 4.1}
as a critical point of the functional
\begin{equation}\label{E}
E(v)=\frac12\int_{\R^N}|\nabla v|^2 dx-\int_{\R^N}VF(u_{k,V},v_+)dx
\end{equation}
defined on $\mathcal{D}^{1,2}(\R^N)$, where
\begin{equation}\label{F}
 F(s,t)=\frac1{p+1}\left[(s+t_+)^{p+1}-s^{p+1}-(p+1)s^pt_+\right].
\end{equation}

Let $V_0$ be given in (\ref{a}) and
denote by $L^{q}(\R^N, V_0dx)$ the weighted $L^{q}$ space defined by
\[
L^{q}(\R^N, V_0dx) = \{u: \int_{\R^N}V_0|u|^q\,dx<+\infty\}.
\]
We may verify by the following lemma that the functional $E$ is well-defined on $\mathcal{D}^{1,2}(\R^N)$.

\begin{lemma}\label{lm 4.1}
Let $a_0< 2,\  a_\infty>\max\{a_0,0\}$  and $p>1$ satisfy (\ref{pq}). Then
the inclusion $\mathcal{D}^{1,2}(\R^N)\hookrightarrow L^{p+1}(\R^N, V_0\,dx)$
is continuous and compact.
\end{lemma}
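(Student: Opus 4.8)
The plan is to prove the continuous and compact embedding $\mathcal{D}^{1,2}(\R^N)\hookrightarrow L^{p+1}(\R^N, V_0\,dx)$ by splitting $\R^N$ into three regions --- a neighbourhood $B_1(0)$ of the singularity of $V_0$, the exterior region $\R^N\setminus B_R(0)$ where $V_0$ decays like $|x|^{-a_\infty}$, and the bounded annulus $B_R(0)\setminus B_1(0)$ --- and handling each by an appropriate weighted inequality. The exponent condition \eqref{pq} guarantees $p+1\in(2^*(a_\infty),2^*(a_0))\cap[1,2^*)$, which is exactly what is needed for the relevant weighted inequalities to be subcritical.

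First I would recall the Caffarelli--Kohn--Nirenberg (or Hardy--Sobolev) inequality: for $0\le a<2$ and $q=2^*(a)=\frac{2N-2a}{N-2}$ one has $\int_{\R^N}\frac{|v|^{q}}{|x|^{a}}\,dx\le C\left(\int_{\R^N}|\nabla v|^2\,dx\right)^{q/2}$ for all $v\in\mathcal{D}^{1,2}(\R^N)$. On $B_1(0)$ we have $V_0(x)\le c_1|x|^{-a_0}$, and since $p+1<2^*(a_0)$, we interpolate: writing $|x|^{-a_0}\le |x|^{-a}$ for a suitable $a\in(a_0,2)$ with $p+1=2^*(a)$ is not quite right since $2^*$ is decreasing, so instead I would use that $p+1<2^*(a_0)$ to bound $\int_{B_1}V_0|v|^{p+1}\,dx$ by $\left(\int_{B_1}|x|^{-a_0\sigma'}\right)^{1/\sigma'}\left(\int_{B_1}|v|^{(p+1)\sigma}\right)^{1/\sigma}$ via H\"older, choosing $\sigma>1$ so that $(p+1)\sigma\le 2^*$ and $a_0\sigma'<N$; such $\sigma$ exists precisely because $p+1<\min\{2^*(a_0),2^*\}$ and $a_0<N$. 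On $\R^N\setminus B_R(0)$, $V_0(x)\le c_1|x|^{-a_\infty}$, and since $p+1>2^*(a_\infty)$ (so the weight decays fast enough) one again applies H\"older with an exponent pair exploiting $a_\infty>a_0\ge 0$; concretely, $\int_{|x|\ge R}|x|^{-a_\infty}|v|^{p+1}\,dx\le\left(\int_{|x|\ge R}|x|^{-a_\infty\theta'}\right)^{1/\theta'}\|v\|_{2^*}^{p+1}$ works when $p+1\le 2^*$ and $a_\infty\theta'>N$, which is solvable since $p+1>2^*(a_\infty)$. On the annulus $B_R\setminus B_1$, $V_0$ is bounded, and $\mathcal{D}^{1,2}(\R^N)\hookrightarrow L^{p+1}(B_R\setminus B_1)$ compactly by Rellich--Kondrachov since $p+1<2^*$.

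For continuity, I would combine the three estimates above to get $\int_{\R^N}V_0|v|^{p+1}\,dx\le C\|v\|_{\mathcal{D}^{1,2}}^{p+1}$. For compactness, take a bounded sequence $v_n\rightharpoonup v$ in $\mathcal{D}^{1,2}(\R^N)$; by Rellich it converges strongly in $L^{p+1}_{\loc}$, handling the annulus. For the tail contributions near $0$ and near $\infty$, the H\"older estimates above show that $\int_{B_\varepsilon}V_0|v_n|^{p+1}\,dx\le C\left(\int_{B_\varepsilon}|x|^{-a_0\sigma'}\right)^{1/\sigma'}\|v_n\|_{\mathcal{D}^{1,2}}^{p+1}$, which is uniformly small as $\varepsilon\to0$ because $a_0\sigma'<N$ makes $\int_{B_\varepsilon}|x|^{-a_0\sigma'}\to0$; similarly $\int_{|x|\ge R}V_0|v_n|^{p+1}\,dx\to0$ uniformly in $n$ as $R\to\infty$ because $a_\infty\theta'>N$ makes $\int_{|x|\ge R}|x|^{-a_\infty\theta'}\to0$. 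A standard $\varepsilon/3$ argument then yields $v_n\to v$ in $L^{p+1}(\R^N,V_0\,dx)$.

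The main obstacle is bookkeeping the exponents: one must check that the H\"older exponents $\sigma,\theta$ can be chosen strictly on the correct side of all the borderline conditions simultaneously --- $(p+1)\sigma\le 2^*$, $a_0\sigma'<N$ near the origin and $p+1\le 2^*$, $a_\infty\theta'>N$ at infinity --- and that these are \emph{exactly} the conditions encoded in $p+1\in(2^*(a_\infty),2^*(a_0))\cap[1,2^*)$ together with $a_0<N$, $a_\infty>a_0$. Once the arithmetic of these inequalities is laid out cleanly (which is where the hypotheses \eqref{pq} and $a_0<2$, $a_\infty>\max\{a_0,0\}$ get used in full), the analytic content is just H\"older's inequality plus the Hardy--Sobolev/Rellich theorems, and the compactness follows from uniform smallness of the tails.
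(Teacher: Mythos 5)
Your argument is correct, and the exponent bookkeeping you flag as the main obstacle does close up: near the origin the pair of constraints $(p+1)\sigma\le 2^*$, $a_0\sigma'<N$ is solvable iff $\tfrac{N}{N-a_0}<\tfrac{2^*}{p+1}$, i.e. $p+1<2^*(a_0)$, and at infinity $a_\infty\theta'>N$ with $\theta=\tfrac{2^*}{p+1}$ is exactly $p+1>2^*(a_\infty)$, so the hypotheses in \eqref{pq} are precisely what you need. The route differs from the paper's in one substantive way: the paper's engine is the Hardy--Sobolev interpolation inequality $\int|x|^{-\beta}|\xi|^{2^*(\beta)}dx\le c\|\xi\|_{\mathcal{D}^{1,2}}^{2^*(\beta)}$ (obtained from H\"older, Hardy and Sobolev), applied with $\beta=a_0$ on $B_1$ and with $\beta=\tau=N-\tfrac{(p+1)(N-2)}{2}<a_\infty$ outside $B_1$, followed by H\"older on the bounded piece; you instead apply H\"older directly against the unweighted $L^{2^*}$ norm and absorb the weight into an $L^{\sigma'}$ (resp.\ $L^{\theta'}$) integral. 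Your version is more elementary (no Hardy inequality) at the cost of losing the endpoint exponents $q=2^*(a_0)$, $q=2^*(a_\infty)$, which the lemma does not need since \eqref{pq} is an open condition. Incidentally, the alternative you dismiss as ``not quite right'' --- majorizing $|x|^{-a_0}$ by $|x|^{-a}$ on $B_1$ with $a\in(a_0,2)$ chosen so that $2^*(a)=p+1$ --- is in fact legitimate (such $a$ exists because $2^*$ is decreasing and $2=2^*(2)<p+1<2^*(a_0)$, and $|x|^{-a_0}\le|x|^{-a}$ on $B_1$ since $a>a_0$), and is essentially the paper's mechanism. Finally, your compactness argument is slightly more complete than the paper's: the paper only makes the tail at infinity uniformly small and then invokes local compactness on $B_R$, glossing over the singular weight at the origin, whereas your uniform small-ball estimate $\int_{B_\varepsilon}|x|^{-a_0\sigma'}dx\to0$ handles that point explicitly.
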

\begin{proof}  For $\beta\in(0,2)$, it follows by H\"{o}lder's inequality, the Hardy inequality and the Sobolev
inequality that
\begin{eqnarray}\label{4.2}
 \int_{\R^N} \frac{ \xi^{2^*(\beta)}}{|x|^\beta}dx &\le & \left( \int_{\R^N}\frac{\xi^2}{|x|^2}  dx \right)^{\frac{\beta}{2}}  \left( \int_{\R^N} \xi^{2^*} dx \right)^{\frac{2-\beta}{2} }\nonumber \\&\le &c_{36}\left(\int_{\R^N}|\nabla \xi|^2 dx\right)^{\frac{\beta}{2}+\frac{2-\beta}{2}\frac{2^*}{2}}
  =c_{36}\left(\int_{\R^N}|\nabla \xi|^2 dx\right)^{\frac{2^*(\beta)}{2}}.
\end{eqnarray}

We claim that the  inclusion $\mathcal{D}^{1,2}(\R^N)\hookrightarrow L^{q}(\R^N, V_0dx)$
is continuous if
$$\max\{2^*(a_\infty),1\}\le q \le \min\{2^*(a_0), 2^*\}.$$
For $\xi\in  \mathcal{D}^{1,2}(\R^N)$, if $0\le a_0<2$, by (\ref{4.2}),
$$\norm{\xi}_{L^{2^*(a_0)}(B_1(0),|x|^{a_0} dx)}\leq\norm{\xi}_{L^{2^*(a_0)}(\R^N,|x|^{a_0} dx)}\le c_{36}\norm{\xi}_{\mathcal{D}^{1,2}(\R^N)}. $$
If $a_0<0$, by Sobolev inequality,
$$\norm{\xi}_{L^{2^*}(B_1(0),|x|^{a_0} dx)}\le \norm{\xi}_{L^{2^*}(B_1(0))}\le c_{36}\norm{\xi}_{\mathcal{D}^{1,2}(\R^N)}. $$
Using H\"{o}lder's inequality, we obtain
\[
\norm{\xi}_{L^{q}(B_1(0),|x|^{a_0} dx)}\leq c_{36}\norm{\xi}_{L^{2^*(a_0)}(B_1(0),|x|^{a_0} dx)}\le c_{36}\norm{\xi}_{\mathcal{D}^{1,2}(\R^N)}.
\]
Moreover, for $a_\infty\in(0,2]$, we have  $2^*(a_\infty)< q\le 2^*$, by H\"{o}lder's inequality and (\ref{4.2}),
\[
 \int_{\R^N\setminus B_1(0)}|\xi|^q|x|^{-a_\infty}\,dx \le  \int_{\R^N\setminus B_1(0)}|\xi|^q|x|^{-\tau}\,dx
 \le  \norm{\xi}_{\mathcal{D}^{1,2}(\R^N)}^{\frac q2},
\]
where $\tau=N-\frac{q(N-2)}2< a_\infty$.
The case $a_\infty>2$ can be reduced to the case $a_\infty\in(0,2]$.

In conclusion, for all the cases, we have
\begin{equation}\label{4.3}
\norm{\xi}_{L^{q}(B_1(0),|x|^{a_0} dx)}\le c_{36}\norm{\xi}_{\mathcal{D}^{1,2}(\R^N)}, \quad \norm{\xi}_{L^{2^*}(\R^N\setminus B_1(0),|x|^{a_\infty} dx)}
 \le  \norm{\xi}_{\mathcal{D}^{1,2}(\R^N)}.
 \end{equation}
Combining with the fact that
$$\lim_{t\to0^+}V_0(t)t^{a_0}=c_1\quad{\rm and}\quad\lim_{t\to+\infty}V_0(t)t^{a_\infty}=c_1,$$
then the claim is true.

\bigskip

 We next show that the inclusion $\mathcal{D}^{1,2}(\R^N)\hookrightarrow L^q(\R^N, V_0dx)$
is compact if
\begin{equation}\label{4.4}
 \max\{ 2^*(a_\infty),1\}< q < \min\{2^*(a_0), 2^*\}.
\end{equation}

Let $\{\xi_n\}_n$ be a bounded sequence in $\mathcal{D}^{1,2}(\R^N)$.  For any $\varepsilon>0$, there exists $R>0$ such that for $\tau = N-\frac{q(N-2)}2< a_\infty$,
\begin{equation}\label{4.5}
 \int_{\R^N\setminus B_{R}(0)} |\xi_n|^q |x|^{-a_\infty}\,dx  \le R^{-a_\infty+\tau} \int_{\R^N\setminus B_{R}(0)} |\xi_n|^q |x|^{-\tau}\,dx
 \le cR^{-a_\infty+\tau}\le \frac\varepsilon2.
\end{equation}
By the Sobolev embedding, $\xi_n\to\xi$ in $H^1(B_{R}(0))$ up to a subsequence. This, together with \eqref{4.5}, yields the result.

\end{proof}

\begin{corollary}\label{cor 4.1}
The inclusion $id$: $\mathcal{D}^{1,2}(\R^N)\hookrightarrow L^2(\R^N, V_0u_{k,V}^{p-1}dx)$ is continuous and compact if $k\le k_p$.
\end{corollary}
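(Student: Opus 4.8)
The plan is to reduce Corollary \ref{cor 4.1} to Lemma \ref{lm 4.1} together with the pointwise decay estimate \eqref{2.111} for the minimal solution. Recall that for $p>1$ and $k\le k_p$ we have, by \eqref{2.111}, the bound $u_{k,V}(x)\le c_{21}k\,\mathbb{G}[\delta_0](x)=c_{21}c_N k\,|x|^{2-N}$ on $\R^N\setminus\{0\}$. Hence
\begin{equation}\label{cor41-1}
V_0(x)\,u_{k,V}^{p-1}(x)\le (c_{21}c_N k)^{p-1}\,c_1\,\frac{|x|^{(2-N)(p-1)-a_0}}{1+|x|^{a_\infty-a_0}}.
\end{equation}
Set $b_0=a_0+(N-2)(p-1)$ and $b_\infty=a_\infty+(N-2)(p-1)$. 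Then the right-hand side of \eqref{cor41-1} is comparable to $V_1(x):=|x|^{-b_0}(1+|x|^{b_\infty-b_0})^{-1}$, i.e. a weight of exactly the same type as $V_0$ but with exponents $b_0,b_\infty$ in place of $a_0,a_\infty$. Thus the inclusion $\mathcal{D}^{1,2}(\R^N)\hookrightarrow L^2(\R^N, V_0u_{k,V}^{p-1}dx)$ will follow from the $q=2$ case of the claim proved inside Lemma \ref{lm 4.1}, applied to the weight $V_1$.

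The second step is therefore to check that $b_0,b_\infty$ satisfy the hypotheses needed to run the argument of Lemma \ref{lm 4.1} at exponent $q=2$: namely $b_0<2$, $b_\infty>\max\{b_0,0\}$, and the compactness range $\max\{2^*(b_\infty),1\}<2<\min\{2^*(b_0),2^*\}$. Now $2<2^*$ always, $2^*(b_0)>2\iff b_0<2$, and $2^*(b_\infty)<2\iff b_\infty>2$. The inequalities $b_0<2$ and $b_\infty>2$ are precisely a reformulation of the range \eqref{p} for $p$: indeed $b_0<2\iff p<\frac{N-a_0}{N-2}+1$, which is implied by $p<\frac{N-a_0}{N-2}$ when combined with $a_0<2$ (so that $\frac{N-a_0}{N-2}<\frac{N-a_0}{N-2}+1$ and more directly $(N-2)(p-1)<N-2-a_0+ (N-2)$...); more transparently one uses the computation already recorded in the proof of Proposition \ref{pr 3.2}, where it is shown that for $p\in(\frac{N-a_\infty}{N-2},\frac{N-a_0}{N-2})$ one has $(2-N)(p-1)-a_0\ge -2$ and $(2-N)(p-1)-a_\infty<-N$; these are exactly $b_0\le 2$ and $b_\infty>N\ (>2)$. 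So after a short bookkeeping with the exponents, the weight $V_1$ falls strictly inside the admissible window of Lemma \ref{lm 4.1}.

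Finally, with $b_0,b_\infty$ verified, I would simply invoke Lemma \ref{lm 4.1} (its proof only used \eqref{a}-type bounds on the weight, the Hardy and Sobolev inequalities, and the local Sobolev compact embedding, none of which is special to $a_0,a_\infty$) to conclude that $\mathcal{D}^{1,2}(\R^N)\hookrightarrow L^2(\R^N,V_1dx)$ is continuous and compact, and then transfer this through the pointwise comparison \eqref{cor41-1} to get continuity and compactness of $\mathcal{D}^{1,2}(\R^N)\hookrightarrow L^2(\R^N,V_0u_{k,V}^{p-1}dx)$: continuity is immediate from $\int V_0u_{k,V}^{p-1}\xi^2\le C\int V_1\xi^2$, and compactness follows since the tail estimate near infinity is controlled by $b_\infty>2$ exactly as in \eqref{4.5}, while near the origin $V_0u_{k,V}^{p-1}\le C|x|^{-b_0}$ with $b_0<2$ gives an $L^{2^*(b_0)}$-vs-$L^2$ gain that upgrades weak convergence to strong. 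The only mild subtlety — the ``main obstacle'' — is the boundary case $b_0=2$ (equivalently $(2-N)(p-1)-a_0=-2$, i.e. $p$ at the right endpoint relation): there the embedding into $L^2(|x|^{-2}dx)$ near $0$ is only the borderline Hardy inequality, which is continuous but \emph{not} compact. This is excluded here because \eqref{pq} forces $p$ strictly below $\frac{N-a_0}{N-2}$ and $a_0<2$ strictly, so $b_0<2$ strictly; one should just make this strict inequality explicit when quoting the compactness range \eqref{4.4}.
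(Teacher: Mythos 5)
Your proposal is correct and follows essentially the same route as the paper: bound $V_0u_{k,V}^{p-1}$ via \eqref{2.111} by a weight of $V_0$-type with shifted exponents $b_0=a_0+(p-1)(N-2)$, $b_\infty=a_\infty+(p-1)(N-2)$, then invoke the proof of Lemma \ref{lm 4.1} at $q=2$ after checking $2^*(b_\infty)<2<\min\{2^*(b_0),2^*\}$, i.e.\ $b_0<2<b_\infty$, which is exactly the condition \eqref{4.6} the paper verifies. Your extra remark that $b_0<2$ holds \emph{strictly} (so the borderline Hardy case is avoided) is a small but worthwhile clarification; just note that $b_0<2$ is directly equivalent to $p<\frac{N-a_0}{N-2}$, so the slightly garbled intermediate inequality in your second paragraph is unnecessary.
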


\begin{proof}
Since $u_{k,V}(x)\le c_{21} c_N k|x|^{2-N}$ if $k\le k_p$, there exists $c_{38}>0$ such that
$$\limsup_{|x|\to0^+}u_{k,V}^{p-1}(x)V_0(x)|x|^{a_0+(p-1)(N-2)}\le c_{38}$$
and
$$\limsup_{|x|\to+\infty}u_{k,V}^{p-1}(x) V_0(x)|x|^{a_\infty+(p-1)(N-2)}\le c_{38},$$
By the proof of Lemma \ref{lm 4.1}, we see that the inclusion $id$
is continuous and compact if
\begin{equation}\label{4.6}
\max\{ 2^*(a_\infty+(p-1)(N-2)),1\}< q < \min\{2^*(a_0+(p-1)(N-2)), 2^*\}.
\end{equation}
This is the case if $q = 2$. The assertion follows.
\end{proof}

\medskip

\noindent{\bf Proof of Theorem \ref{teo 1}.
}  Now we prove Theorem \ref{teo 1} by the Mountain Pass Theorem.

For any $\epsilon>0$, there exists $C_{\epsilon}>0$ such that
$$0\le  F(s,t) \le (p+\epsilon)s^{p-1}t^2+C_{\epsilon}t^{p+1},\quad s,t\ge0$$
By (\ref{a}) and Lemma \ref{lm 4.1}, for any $v\in \mathcal{D}^{1,2}(\R^N)$,
\begin{eqnarray}
 \int_{\R^N}VF(u_{k,V},v_+) \,dx   &\le &  (p+\epsilon)\int_{\R^N}Vu_{k,V}^{p-1}v_+^2 dx + C_\epsilon\int_{\R^N}Vv_+^{p+1}\, dx \label{4.1}\\
    &\le &  c_\epsilon\norm{v}_{\mathcal{D}^{1,2}(\R^N)},\nonumber
\end{eqnarray}
where $c_\epsilon>0$.
It implies $E$ is well-defined and we verify that $E$ is $C^1$ on $\mathcal{D}^{1,2}(\R^N)$.

Let $v\in \mathcal{D}^{1,2}(\R^N)$ be such that $\norm{v}_{\mathcal{D}^{1,2}(\R^N)}=1$.
For  $k\in(0,k_p)$ and $\epsilon>0$ small enough, we infer from Corollary \ref{cr 3.1} that
\begin{eqnarray*}
   E(tv) &= & \frac12\|tv\|_{\mathcal{D}^{1,2}(\R^N)}^2- \int_{\R^N}V F(u_{k,V},tv_+)\,dx \\
   &\ge&t^2\left(\frac12\|v\|_{\mathcal{D}^{1,2}(\R^N)}^2- (p+\epsilon) \int_{\R^N} V_0v_k^{p-1} v^2 dx\right)- C_\epsilon t^{p+1}\int_{\R^N} V_0 |v|^{p+1} dx
   \\
   &\ge & c_{39} t^2\|v\|_{\mathcal{D}^{1,2}(\R^N)}^2 - c_{40} t^{p+1} \|v\|_{\mathcal{D}^{1,2}(\R^N)}^{p+1}
    \\
   &= & c_{39}t^2-c_{40}t^{p+1},
\end{eqnarray*}
where $c_{39},c_{40}>0$.
So there exists $t_0>0$  small such that
$$ E(t_0 v)\ge    \frac{c_{39}} 4t_0^2=:\beta>0.$$

On the other hand, we fix a nonnegative function $v_0\in \mathcal{D}^{1,2}(\R^N)$ with $\|v_0\|_{\mathcal{D}^{1,2}(\R^N)}=1$ and its support is a subset of the supp$V$.
Since $(a+b)^p\geq a^p + b^p$ for $a, b >0$ and $p>1$,
 $$ F(u_{k,V},tv_0)\ge \frac 1{p+1}\left(t^{p+1} v_0^{p+1} - (p+1)u_{k,V}^ptv_0\right),$$
There exists $T>0$ such that for $t\ge T$,
\begin{eqnarray*}
  E(tv_0)&=& \frac {t^2}2\|v_0\|_{\mathcal{D}^{1,2}(\R^N)}^2- \int_{\R^N} V F(u_{k,V},tv_0)  dx\\
   &\le &  \frac {t^2}2\|v_0\|_{\mathcal{D}^{1,2}(\R^N)}^2-\frac1{p+1}t^{p+1} \int_{\R^N}V  v_0^{p+1} dx+t\int_{\R^N}V  u_{k,V}^{p}v_0 dx
  \\ &\le & 
   0.
\end{eqnarray*}
Choosing $e=Tv_0$, we have $E(e)\le 0$.

Next, we verify that $E$ satisfies $(PS)_c$ condition, that is, for any sequence
$\{v_n\}$ in $\mathcal{D}^{1,2}(\R^N)$ such that $ E(v_n)\to c$ and $ E'(v_n)\to0$ as $n\to\infty$, $\{v_n\}$, which is called a  $(PS)_c$ sequence, contains a convergent subsequence.

Let  $\{v_n\}\subset \mathcal{D}^{1,2}(\R^N)$ be  a $(PS)_c$ at the mountain pass level
\begin{equation}\label{c}
 c=\inf_{\gamma\in \Gamma}\max_{s\in[0,1]}E(\gamma(s)),
\end{equation}
where $\Gamma =\{\gamma\in C([0,1];\mathcal{D}^{1,2}(\R^N)):\gamma(0)=0,\ \gamma(1)=e\}$, and
$c\ge \beta.$

By (\ref{3.1}) and the inequality, see \cite[C.2 $(iv)$]{NS},
$$f(s,t)t-(2+c_p)F(s,t)\ge -\frac{c_pp}{2}s^{p-1}t^2,\quad s,t\ge0,$$
where $c_p=\min\{1,p-1\}$, we deduce from $E(v_n)\to c,\,\, E'(v_n)\to 0$ that for $c_{41},c_{42}>0$,
\begin{eqnarray*}
c_{41}+ c_{41}\|v_n\|_{\mathcal{D}^{1,2}(\R^N)}&\ge&  \frac{c_p}2\|v_n\|^2_{\mathcal{D}^{1,2}(\R^N)} -\int_{\R^N}V\left[(2+c_p)F(u_{k,V},(v_n)_+)-f(u_{k,V},(v_n)_+)(v_n)_+ \right]dx   \\
    &\ge &  \frac{c_p}2 \left[\|v_n\|^2_{\mathcal{D}^{1,2}(\R^N)}-p \int_{\R^N}V u_{k,V}^{p-1}v_n^2dx\right]
     \\    &\ge &c_{42} \frac{c_p}2\|v_n\|^2_{\mathcal{D}^{1,2}(\R^N)}.
\end{eqnarray*}
Therefore, $v_n$ is uniformly bounded in $\mathcal{D}^{1,2}(\R^N)$ for $k\in(0,k^*)$. We may assume that there exists $v\in \mathcal{D}^{1,2}(\R^N)$ such that
$$v_{n}\rightharpoonup v\quad{\rm in}\quad \mathcal{D}^{1,2}(\R^N),\quad v_{n}\to v\quad{\rm a.e.\ in}\ \R^N.$$
By   Lemma \ref{lm 4.1},
$$v_{n}\to v\quad {\rm in}\ \ L^2(\R^N, V_0u_{k,V}^{p-1}dx)\quad{\rm and}\ \ L^{p+1}(\R^N, V_0dx) \quad {\rm as}\ n\to\infty.$$
Invoking the inequality
\begin{eqnarray*}
&&|F(u_{k,V},v_n)- F(u_{k,V},v)|
\\&&\qquad= \frac1{p+1}|(u_{k,V}+(v_n)_+)^p-(u_{k,V}+v_+)^p-(p+1)u_{k,V}^p((v_n)_+-v_+)| \\
   &&\qquad\le (p+\epsilon)u_{k,V}^{p-1} ((v_n)_+-v_+)^2+C_\epsilon ((v_n)_+-v_+)^{p+1},
\end{eqnarray*}
we have
$$F(u_{k,V},v_n)\to F(u_{k,V},v)\quad{\rm a.e.\ in}\ \R^N\quad{\rm and}\quad L^1(\R^N,V_0dx).$$
This, together with $\lim_{n\to\infty} E(v_{n})=c$, implies
$\|v_{n}\|_{\mathcal{D}^{1,2}(\R^N)}\to \|v\|_{\mathcal{D}^{1,2}(\R^N)}$ as $n\to\infty$.
Hence, $v_{n}\to v$ in $\mathcal{D}^{1,2}(\R^N)$ as $n\to\infty$.

By the Mountain Pass Theorem in \cite{ar}, there exists a nontrivial critical point $v_k\in \mathcal{D}^{1,2}(\R^N)$ of $ E$, which is nonnegative. Thus, $v_k$ is a weak solution
of (\ref{eq 4.1}). Hence,
\begin{equation}\label{4.7}
\int_{\R^N}\nabla(u_{k,V}+ v_k)\cdot\nabla\varphi\,dx  = \int_{\R^N} V (u_{k,V}+ v_k)^{p}\varphi\,dx
\end{equation}
for $\varphi\in \mathcal{D}^{1,2}(\R^N)$ with $0\not\in {\rm supp}\,\varphi$. We may show as (2.3) in \cite{E} that for any $x_0\not=0$ and $r<\frac 12|x_0|$, there holds
\[
\sup_{|x-x_0|<r}|u(x)| = \lim_{q\to+\infty}\big(\int_{B_r(x_0)}V(x)|u(x)|^q\,dx \big)^{\frac 1q}.
\]
By the assumption that $p<\frac{N-a_0}{N-2}$ and $u_{k,V}(x)\le c_{21}k|x|^{2-N}$,  there exists $q>\frac N2$ such that
$$Vu_{k,V}^{p-1}\in L^q_{loc}(\R^N\setminus\{0\}).$$ Therefore,  we deduce by the Moser-Nash iteration, see for instance \cite{E, HL}, that $u_{k,V}+ v_k\in L^\infty_{loc}(\R^N\setminus\{0\})$, from this $u_{k,V}+ v_k\in C^2_{loc}(\R^N\setminus\{0\})$. Moreover,  by Theorem 2 in \cite{E} we have
$$\limsup_{|x|\to+\infty} (u_{k,V}+v_k)|x|^{N-2}<+\infty, $$
implying
 $$V(u_{k,V}+v_k)^p\in L^1(\R^N).$$

In conclusion,
\begin{equation}\label{4.8}
\int_{\R^N} (u_{k,V}+v_k)(-\Delta)  \xi dx =\int_{\R^N}   V  (u_{k,V}+v_k)^{p}\xi dx +k\xi(0), \quad \forall \xi\in C^{1,1}_c(\R^N).
\end{equation}
This means that $v_k +u_{k,V}$ is weak solution of  ($P_k$), and also implies $v_k+u_{k,V}$ is a classical solution of (\ref{eq 1.2}). The maximum principle yields $v_k>0$, and then $v_k+u_{k,V}> u_{k,V}$. So we obtain two positive solutions of ($P_k$). \hfill$\Box$

\medskip

Finally, we consider the case that $V$ is radially symmetric. Denote by $\mathcal{D}_r^{1,2}(\R^N)$ the closure of all the radially symmetric functions in $C^\infty_c(\R^N)$ under the norm
$$\norm{v}_{\mathcal{D}^{1,2}(\R^N)}=\left(\int_{\R^N}|\nabla v|^2 dx \right)^{\frac12}.$$
Suppose $ a_0< 2,\  a_\infty>\max\{a_0,0\}$ and $p$ satisfy (\ref{pq1}), we may show that
the inclusion $\mathcal{D}_r^{1,2}(\R^N)\hookrightarrow L^{p+1}(\R^N, Vdx)$
is continuous and compact.
\bigskip

\noindent{\bf Proof of Theorem \ref{teo 2}}.
Since $V$ is radially symmetric, so is the minimal solution $u_{k,V}$ of ($P_k$) for $k\in (0,k_p]$. The solution $u_{k,V}$ is also stable.  By the Mountain Pass Theorem, we may find a critical point of the functional
\begin{equation}\label{Er}
E_r(v)=\frac12\int_{\R^N}|\nabla v|^2 dx-\int_{\R^N}VF(u_{k,V},v_+)dx
\end{equation}
in $\mathcal{D}_r^{1,2}(\R^N)$. The rest of the proof  is similar to the proof  of Theorem \ref{teo 1}, we sketch it.
\hfill$\Box$

\section{Appendix: Regularities}
\medskip
We recall $G(x,y)=\frac{c_N}{|x-y|^{N-2}}$ the Green kernel of $-\Delta$ in $\R^N\times\R^N$,
  $\mathbb{G}[\cdot]$  the
Green operator defined as
$$\mathbb{G}[f](x)=\int_{\R^N} G(x,y)f(y)dy. $$

\begin{proposition}\label{embedding}
Suppose that $\Omega\subset \R^N$ is a bounded domain and $h\in L^s(\Omega)$. Then,
there exists $c_{43}>0$ such that

\noindent$(i)$
\begin{equation}\label{a 4.1}
\|\mathbb{G}[h]\|_{L^\infty(\Omega)}\le c_{43}\|h\|_{L^s(\Omega)}\quad{\rm if}\quad \frac1s<\frac{2 }N;
\end{equation}

\noindent$(ii)$
\begin{equation}\label{a 4.2}
\|\mathbb{G}[h]\|_{L^r(\Omega)}\le c_{43}\|h\|_{L^s(\Omega)}\quad{\rm if}\quad \frac1s\le \frac1r+\frac{2}N\quad
\rm{and}\quad s>1;
\end{equation}

\noindent$(iii)$
\begin{equation}\label{a 4.02}
\|\mathbb{G}[h]\|_{L^r(\Omega)}\le c_{43}\|h\|_{L^1(\Omega)}\quad{\rm if}\quad 1<\frac1r+\frac{2}N.
\end{equation}
\end{proposition}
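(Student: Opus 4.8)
\emph{Proof plan.} The plan is to recognize $\mathbb{G}[h]$ on $\Omega$ as a convolution against a truncated Riesz kernel and then run everything through Young's convolution inequality, invoking the Hardy--Littlewood--Sobolev inequality only at the single borderline exponent. Concretely, extend $h$ by zero outside $\Omega$ and fix $R>0$ with $\Omega-\Omega\subset B_R(0)$, so that for $x\in\Omega$ one has $\mathbb{G}[h](x)=(K_R*h)(x)$ with $K_R(z)=c_N|z|^{2-N}\chi_{B_R(0)}(z)$. The one computation that drives the whole proof is that $K_R\in L^t(\R^N)$ precisely for $1\le t<\frac{N}{N-2}$, with $\|K_R\|_{L^t}$ depending only on $N$, $R$, $t$; beyond that I use Young's inequality $\|K_R*h\|_{L^r}\le\|K_R\|_{L^t}\|h\|_{L^s}$ (valid when $1+\frac1r=\frac1t+\frac1s$), and on the bounded set $\Omega$ the trivial inclusion $L^{r_1}(\Omega)\hookrightarrow L^{r_2}(\Omega)$ for $r_2\le r_1$.

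For $(i)$ I apply Hölder's inequality pointwise in $x$, getting $|\mathbb{G}[h](x)|\le\|K_R\|_{L^{s'}}\|h\|_{L^s}$ uniformly on $\Omega$, where $\frac1s+\frac1{s'}=1$; the right side is finite exactly when $s'<\frac{N}{N-2}$, which is $\frac1s<\frac2N$. For $(iii)$ I take $t=r$ and $s=1$ in Young's inequality: $\|\mathbb{G}[h]\|_{L^r(\Omega)}\le\|K_R\|_{L^r}\|h\|_{L^1}$, finite exactly when $r<\frac{N}{N-2}$, i.e. $\frac1r+\frac2N>1$.

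For $(ii)$ I split into three cases. If $r\le s$, then Young with $t=1$ gives $\|\mathbb{G}[h]\|_{L^s(\Omega)}\le\|K_R\|_{L^1}\|h\|_{L^s}$ and the claim for $r$ follows from $L^s(\Omega)\hookrightarrow L^r(\Omega)$. If $r>s$ and $\frac1s<\frac1r+\frac2N$ strictly, the exponent defined by $\frac1t=1+\frac1r-\frac1s$ satisfies $\frac{N-2}{N}<\frac1t\le1$, so $t\in[1,\frac{N}{N-2})$ and Young applies directly. If $r>s$ and $\frac1s=\frac1r+\frac2N$, then automatically $\frac1r=\frac1s-\frac2N>0$, hence $1<s<\frac N2$, and since $\mathbb{G}[h]$ is, up to a constant, the Riesz potential $I_2$ of order $2$ applied to $h\chi_\Omega$, the bound $\|\mathbb{G}[h]\|_{L^r(\R^N)}\le C\|h\|_{L^s(\Omega)}$ is exactly the Hardy--Littlewood--Sobolev inequality.

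The only non-elementary point is this last borderline case, where $t$ is forced to equal $\frac{N}{N-2}$, the truncated kernel just fails to lie in $L^t$, and Young's inequality must be replaced by the Hardy--Littlewood--Sobolev (weak Young) inequality; every other estimate reduces to Young's inequality together with the finiteness of $|\Omega|$.
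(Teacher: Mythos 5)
Your argument is correct, and for part $(i)$ and the case $r\le s$ it coincides with the paper's (the paper's use of Minkowski's integral inequality there is just Young's inequality written out by hand). Where you genuinely diverge is in the case $r>s$: you split off the borderline exponent $\frac1s=\frac1r+\frac2N$ and cite the Hardy--Littlewood--Sobolev inequality for it, handling the strict subcase $\frac1s<\frac1r+\frac2N$ by the elementary observation that the truncated kernel $K_R$ then lies in $L^t$ for the Young exponent $t=\bigl(1+\frac1r-\frac1s\bigr)^{-1}<\frac{N}{N-2}$. The paper instead gives a self-contained proof of the whole range: it truncates the kernel at a radius $\nu$, bounds the level sets of the near part by Chebyshev and the far part in $L^\infty$, optimizes $\nu$ to obtain a weak-type $(s,r^*)$ estimate with $\frac1{r^*}=\frac1s-\frac2N$, and then invokes the Marcinkiewicz interpolation theorem --- in effect reproving HLS rather than quoting it. Your version is more modular and isolates exactly where the non-elementary input is needed (a single borderline exponent), while the paper's version avoids any external citation at the cost of the weak-type/interpolation machinery; note that in the paper's application only non-borderline exponents are ever used, so your Young-only argument would in fact suffice for everything downstream. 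The one point worth making explicit in your write-up is that in the equality case you are tacitly taking $r<\infty$ (so that $\frac1r>0$ forces $1<s<\frac N2$); the endpoint $s=\frac N2$, $r=\infty$ is excluded by the strict inequality in $(i)$ and must likewise be excluded here.
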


\begin{proof}   First, we prove (\ref{a 4.1}). By H\"{o}lder's inequality, for any $x\in\Omega$,
\begin{eqnarray*}
&& \bigg\|\int_\Omega G(x,y) h(y)dy\bigg\|_{L^\infty(\Omega)}\\
 &\le& \bigg\|\big(\int_\Omega G(x,y)^{s'}dy\big)^{\frac1{s'}} \big(\int_\Omega|h(y)|^sdy\big)^\frac1s\|_{L^\infty(\Omega)}
 \\&\le&c_N\|h\|_{L^s(\Omega)}\bigg\|\int_\Omega
 \frac1{|x-y|^{(N-2)s'}}dy\bigg\|_{L^\infty(\Omega)},
\end{eqnarray*}
where $s'=\frac s{s-1}$. Since $\frac1s<\frac{2}N$, $(N-2)s'<N$, we have
\begin{eqnarray*}
\int_\Omega
 \frac1{|x-y|^{(N-2)s'}}dy \le \int_{B_d(x)}\frac1{|x-y|^{(N-2)s'}}dy
 =c_{44}\int_0^d r^{N-1-(N-2)s'}dr
 \le c_{45} d^{N-(N-2)s'},
\end{eqnarray*}
where $c_{44},c_{45}>0$ and $d=\sup\{|x-y|: \ x,y\in\Omega\}$. Then (\ref{a 4.1}) holds.

Next, we prove (\ref{a 4.2}) for $r\le s$ and (\ref{a 4.02}) for
$r = 1$. There holds
\begin{eqnarray*}
&&\left\{\int_\Omega\bigg[\int_\Omega
G(x,y)h(y)dy\bigg]^rdx\right\}^{\frac1r}\\&=&\left\{\int_{\R^N}\bigg[\int_{\R^N}
G(x,y)h(y)\chi_\Omega(x)\chi_\Omega(y) dy\bigg]^rdx\right\}^{\frac1r}
 \\&\le&c_N\left\{\int_{\R^N}\bigg[\int_{\R^N}\frac{h(y)\chi_\Omega(x)\chi_\Omega(y)}{|x-y|^{N-2}}dy\bigg]^rdx\right\}^{\frac1r}
 \\&=&c_N\left\{\int_{\R^N}\left[\int_{\R^N}\frac{h(x-y)\chi_\Omega(x)\chi_\Omega(x-y)}{|y|^{N-2}}dy\right]^rdx\right\}^{\frac1r}.
\end{eqnarray*}
By the Minkowski's inequality, we have that
\begin{eqnarray*}
&&\bigg\{\int_\Omega\bigg[\int_\Omega
G(x,y)h(y)dy\bigg]^rdx\bigg\}^{\frac1r}\\&&\quad\le
c_N\int_{\R^N}\bigg[\int_{\R^N}\frac{h^r(x-y)\chi_\Omega(x)\chi_\Omega(x-y)}{|y|^{(N-2)r}}dx\bigg]^{\frac1r}dy
 \\&&\quad\le c_N\int_{\tilde\Omega}\bigg[\int_{\R^N} h^r(x-y)\chi_\Omega(x)\chi_\Omega(x-y)dx\bigg]^{\frac1r}\frac1{|y|^{N-2}}dy
\\&&\quad\le c_N\|h\|_{L^r(\Omega)}\le c_{46}\|h\|_{L^s(\Omega)},
\end{eqnarray*}
where $c_{46}>0$ $\tilde\Omega=\{x-y:x,y\in\Omega\}$ is bounded.

Finally, we prove (\ref{a 4.2}) in the case
$r> s\ge1$ and $\frac1s\le \frac1r+\frac{2}N$,  and (\ref{a 4.02}) for $r> 1, \,\,1< \frac1r+\frac{2}N$

We claim that if $r>s$ and $\frac1{r^*}=\frac1{s}-\frac{2}N$, the mapping
$h\to \mathbb{G}(h)$ is of weak-type $(s,r^*)$ in the sense that
\begin{equation}\label{weak rs}
|\{x\in\Omega: |\mathbb{G}[h](x)|>t\}|\le
\big(A_{s,r^*}\frac{\|h\|_{L^{s}(\Omega)}}{t}\big)^{r^*},\quad h\in
L^s(\Omega),\quad \rm{all}\ \ t>0,
\end{equation}
where constant $A_{s,r^*}>0$.

Denote for $\nu>0$ that
$$
G_0(x,y)=\left\{ \arraycolsep=1pt
\begin{array}{lll}
G(x,y),\quad & {\rm if}\quad |x-y|\le \nu,\\[2mm]
0,\quad & {\rm if}\quad |x-y|> \nu
\end{array}
\right.
$$
and  $G_\infty(x,y)=G(x,y)-G_0(x,y)$. Then, we have
$$|\{x\in\Omega: |\mathbb{G}[h](x)|>2t\}|\le |\{x\in\Omega: |\mathbb{G}_0[h](x)|>t\}|+|\{x\in\Omega: |\mathbb{G}_\infty[h](x)|>t\}|,$$
where $\mathbb{G}_0[h]$ and $\mathbb{G}_\infty[h]$ are defined
similar to $\mathbb{G}[h]$.

By the Minkowski's inequality, we deduce
\begin{eqnarray*}
|\{x\in\Omega: |\mathbb{G}_0[h](x)|>t\}|&\le&
\frac{\|\mathbb{G}_0(h)\|^s_{L^s(\Omega)}}{t^s}
\\&\le &\frac{\|\int_\Omega \chi_{B_\nu(x-y)}|x-y|^{2-N}|h(y)|dy\|^s_{L^s(\Omega)}}{t^s}
\\&\le&\frac{[\int_\Omega(\int_\Omega |h(x-y)|^sdx)^{\frac1s}|y|^{2-N}\chi_{B_\nu}(y)dy]^s}{t^s}
\\&\le&\frac{\|h\|^s_{L^s(\Omega)} \int_{B_\nu}|x|^{-N+2}dx }{t^s}=c_{47}\frac{\|h\|^s_{L^s(\Omega)} \nu^{2}  }{t^s}.
\end{eqnarray*}

On the other hand,
\begin{eqnarray*}
\|\mathbb{G}_\infty[h]\|_{L^\infty(\Omega)}&\le&\|\int_\Omega
\chi_{B_\nu^c}(x-y)|x-y|^{2-N}|h(y)|dy\|_{L^\infty(\Omega)}
\\&\le&(\int_\Omega |h(y)|^sdy)^{\frac1s}\|(\int_{\Omega\setminus B_\nu(y)}|x-y|^{(2-N)s'}dy)^{\frac1{s'}}\|_{L^\infty(\Omega)}
\\&\le&c_{48}\|h\|_{L^s(\Omega)}\nu^{2-\frac
Ns},
\end{eqnarray*}
where $s'=\frac s{s-1}$ if $s>1$, and if $s = 1$, $s'=\infty$. Choosing $\nu=(\frac
t{c_{48}\|h\|_{L^s(\Omega)}})^{\frac1{2-\frac Ns}}$, we obtain
$$\|\mathbb{G}_\infty[h]\|_{L^\infty(\Omega)}\le t,$$
that means $$|\{x\in\Omega: |\mathbb{G}_\infty[h](x)|>t\}|=0.$$
With this choice of $\nu$, we have that
$$|\{x\in\Omega: |\mathbb{G}[h]|>2t\}|\le c_{49}\frac{\|h\|^s_{L^s(\Omega)}\nu^{2s}}{t^s}
\le c_{50}\left(\frac{\|h\|_{L^s(\Omega)}}{t}\right)^{r^*}.$$ The claim
for $r>s$ follows by the Marcinkiewicz
interpolation theorem.
\end{proof}

 \begin{proposition}\label{embedding1}
Suppose that $\Omega\subset \R^N$ is a bounded domain and $h\in L^s(\Omega)$.
Then, there exists  $c_{51}>0$ such that

\noindent$(i)$
\begin{equation}\label{b 4.1}
\|\nabla\mathbb{G}[h]\|_{L^\infty(\Omega)}\le c_{51}\|h\|_{L^s(\Omega)}\quad{\rm if}\quad \frac1s<\frac{1 }N;
\end{equation}

\noindent$(ii)$
\begin{equation}\label{b 4.2}
\|\nabla\mathbb{G}[h]\|_{L^r(\Omega)}\le c_{51}\|h\|_{L^s(\Omega)}\quad{\rm if}\quad\frac1s\le \frac1r+\frac{1}N\quad
\rm{and}\quad s>1;
\end{equation}

\noindent$(iii)$
\begin{equation}\label{b 4.02}
\|\nabla\mathbb{G}[h]\|_{L^r(\Omega)}\le c_{51}\|h\|_{L^1(\Omega)}\quad{\rm if}\quad 1<\frac1r+\frac{1}N.
\end{equation}
\end{proposition}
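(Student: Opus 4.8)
The plan is to run the proof of Proposition \ref{embedding} almost verbatim, the only structural change being that differentiating the Green kernel raises the singularity of the kernel by one order: since $\nabla_x G(x,y)=-c_N(N-2)\,\frac{x-y}{|x-y|^N}$, one has
$|\nabla\mathbb{G}[h](x)|\le c\int_\Omega |x-y|^{1-N}|h(y)|\,dy$, so the order-$(N-2)$ Riesz kernel $|x-y|^{2-N}$ of Proposition \ref{embedding} is replaced throughout by the order-$(N-1)$ kernel $|x-y|^{1-N}$; consequently every occurrence of the exponent $\frac2N$ becomes $\frac1N$, and otherwise the three cases are treated exactly as before.

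For $(i)$ I would apply H\"older's inequality with conjugate exponents $s,s'$ to get $|\nabla\mathbb{G}[h](x)|\le c\norm{h}_{L^s(\Omega)}\big(\int_\Omega|x-y|^{(1-N)s'}dy\big)^{1/s'}$; the hypothesis $\frac1s<\frac1N$ is precisely $(N-1)s'<N$, so the inner integral is bounded by $c\,d^{\,N-(N-1)s'}$ with $d=\sup\{|x-y|:x,y\in\Omega\}$, uniformly in $x$. For $(ii)$ with $r\le s$ and $(iii)$ with $r=1$, the key observation is that the first-order kernel $|y|^{1-N}\chi_{\tilde\Omega}(y)$, with $\tilde\Omega=\{x-y:x,y\in\Omega\}$ bounded, still lies in $L^1(\R^N)$ because $1-N>-N$; writing $\nabla\mathbb{G}[h]$ as a (truncated) convolution and invoking Minkowski's integral inequality gives $\norm{\nabla\mathbb{G}[h]}_{L^r(\Omega)}\le c\norm{h}_{L^r(\Omega)}\le c\,\abs{\Omega}^{1/r-1/s}\norm{h}_{L^s(\Omega)}$, which settles these cases.

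For the remaining range $1\le s<N$, $r>s$, $\frac1s\le\frac1r+\frac1N$ (with the strict form $\frac1r+\frac1N>1$ when $s=1$), I would prove that $h\mapsto\nabla\mathbb{G}[h]$ is of weak type $(s,r^*)$ with $\frac1{r^*}=\frac1s-\frac1N$, following the truncation argument of Proposition \ref{embedding}: split $|x-y|^{1-N}=G_0+G_\infty$ at radius $\nu$, so that the near part obeys $\norm{\mathbb{G}_0[h]}_{L^s(\Omega)}\le c\norm{h}_{L^s(\Omega)}\int_{B_\nu}|y|^{1-N}dy=c\,\nu\,\norm{h}_{L^s(\Omega)}$ and the far part obeys $\norm{\mathbb{G}_\infty[h]}_{L^\infty(\Omega)}\le c\norm{h}_{L^s(\Omega)}\,\nu^{1-N/s}$; choosing $\nu=\big(t/(c\norm{h}_{L^s(\Omega)})\big)^{1/(1-N/s)}$ makes $|\{|\mathbb{G}_\infty[h]|>t\}|=0$ and leaves $|\{x\in\Omega:|\nabla\mathbb{G}[h](x)|>2t\}|\le c\big(\norm{h}_{L^s(\Omega)}/t\big)^{r^*}$. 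The Marcinkiewicz interpolation theorem then yields the strong-type estimate $(ii)$–$(iii)$ for every $r<r^*$, exactly as in Proposition \ref{embedding}.

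I do not expect a genuine obstacle: this is the ``$\nabla$'' companion of Proposition \ref{embedding} and the argument is a line-by-line transcription with $2-N\mapsto1-N$. The only points meriting care are that the differentiated kernel remains locally integrable (so the Minkowski/convolution step survives) and that the endpoint $\frac1s=\frac1N$ in $(i)$ is genuinely excluded, since there $(N-1)s'=N$ and $\int_{B_d}|x-y|^{-N}dy$ diverges.
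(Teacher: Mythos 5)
Your proposal is correct and is essentially identical to the paper's own treatment: the paper likewise observes that $|\nabla_x G(x,y)|=c_N(N-2)|x-y|^{1-N}$ and then simply states that the conclusions follow as in the proof of Proposition \ref{embedding}, which is exactly the kernel substitution $2-N\mapsto 1-N$ (hence $\tfrac2N\mapsto\tfrac1N$) that you carry out in detail. Your fleshed-out version of the H\"older, Minkowski, and weak-type/Marcinkiewicz steps is accurate, so there is nothing to add.
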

\begin{proof}
Since
\begin{eqnarray*}
|\nabla\mathbb{G}[h](x)|   =   |\int_\Omega \nabla_x G(x,y) h(y)\, dy|
    \le  \int_\Omega |\nabla_x G(x,y)|\,| h(y)|\, dy
\end{eqnarray*}
and
$$|\nabla_x G(x,y)|=c_N(N-2) |x-y|^{1-N},$$
then the conclusions follow   as the proof of Proposition \ref{embedding}.
\end{proof}

 \bigskip

\end{document}